\documentclass{amsart}

\usepackage{xcolor}
\usepackage[verbose,hypertexnames=false]{hyperref}
\hypersetup{colorlinks=false,allbordercolors=blue,pdfborderstyle={/S/U/W 1}}

\usepackage{amsmath,amssymb}
\usepackage{mathtools}

\usepackage{booktabs}
\usepackage{tikz}
\usepackage{float}
\usepackage{xcolor}
\newtheorem{theorem}{Theorem}[section]
\newtheorem{lemma}[theorem]{Lemma}

\newtheorem{corollary}[theorem]{Corollary}

\theoremstyle{definition}
\newtheorem{definition}[theorem]{Definition}
\newtheorem{example}[theorem]{Example}
\newtheorem{remark}[theorem]{Remark}

\begin{document}

\title{S-Equivalence of Band-Twisted Genus One Knot}

\author{Jun Wang}

\address{School of Mathematical Sciences, Hebei Normal University \\
wjun@hebtu.edu.cn}

\author{Ziyi Liu}

\address{School of Mathematical Sciences, Hebei Normal University\\
ziyiliu.topo@outlook.com}

\maketitle

\begin{abstract}
We add twists to a band of a genus-one Seifert surface, producing
	a knot $K(\ell,0)$. We show that $K$ and $K(\ell,0)$ are $\Lambda_1$-equivalent (a unimodular basis change), hence $S$-equivalent precisely when the $(2,2)$-entry of the Seifert matrix of $K$ vanishes and the sum $s=a_{12}+a_{21}$ of its off-diagonal entries divides $\ell$; the weaker $S$-equivalence holds under a broader congruence governed by the subgroup $\mathcal{S}^+_{s^2}$ of Aka--Feller--Miller--Wieser.
	The $\Lambda_1$-equivalence criterion is established by an $SL_2(\mathbb{Z})$-analysis and an explicit $\Lambda_1$-operation; the precise $S$-equivalence condition follows from the framework of Aka--Feller--Miller--Wieser. The Jones polynomial distinguishes the
	knots when $V(K)\neq 1$, yielding infinite families of $S$-equivalent
	but inequivalent genus-one knots, illustrated by $9_{46}$.
The pairs we construct are explicit examples toward the surjectivity direction of Problem~1.6 in Kirby's list (K3) and relate to Problem~7.7 of Aka--Feller--Miller--Wieser: they exhibit $S$-equivalent genus-one Seifert matrices of the same dimension carried by inequivalent knots.\end{abstract}

\keywords{$S$-equivalence; Kirby's problem list (K3); Jones polynomial
}

	\section{Introduction}\label{sec:intro}
	
	Two integral matrices are said to be \emph{$S$-equivalent} \cite{Murasugi1965} if they can be transformed into one another by a sequence of elementary operations $\Lambda_i^{\pm 1}$ ($i=1,2,3$), where $\Lambda_1$ is conjugation by a unimodular matrix (a basis change), and $\Lambda_2$, $\Lambda_3$ enlarge the matrix by adding a row and a column. K. Murasugi \cite{Murasugi1965} proved that the Seifert matrices of the same knot are $S$-equivalent.  In 2003,  Naik and Stanford \cite{NaikStanford2003} proved that $S$-equivalence is generated by the doubled-delta move on knot diagrams, which in general changes the knot type. 
	
	C.\ Livingston proposed the following problem, which is Problem~1.6 in Kirby's list (K3) \cite{BaykurKirbyRuberman2026}:
	
	\begin{quote}
		\textbf{Problem 1.6.} Suppose that $M_1$ and $M_2$ are $S$-equivalent Seifert matrices. Does there exist a fixed knot bounding Seifert surfaces $F_1$ and $F_2$ for which the associated matrices are $M_1$ and $M_2$?
	\end{quote}
	
	As the remarks to Problem~1.6 observe, the motivating question is the surjectivity direction: given an $S$-equivalence class, can one find a knot whose matrices exhaust that class? While the Alexander-polynomial-one $S$-equivalence class provides trivial non-surjectivity examples (any non-trivial knot with $\Delta_K(t)=1$ cannot realize the empty matrix), constructing explicit pairs of $S$-equivalent but inequivalent knots with Seifert matrices of the \emph{same} dimension remains a natural intermediate step toward understanding Problem~1.6.
	
We describe a simple geometric operation on a genus-one Seifert surface---adding twists to a band---and determine precisely when the resulting knot $K(\ell,0)$ is $\Lambda_1$-equivalent to $K$, and more generally when it is $S$-equivalent. Here $2\ell$ twists on the band change the $(1,1)$-entry of the Seifert matrix from $a_{11}$ to $a_{11}-\ell$ (see Section~\ref{construction}).
\begin{theorem}\label{thm:main}
	Let $K$ be a genus-one knot with Seifert matrix $M = \begin{pmatrix} a_{11} & a_{12} \\ a_{21} & a_{22} \end{pmatrix}$ in the canonical band basis, and let $K(\ell,0)$ be the knot obtained by adding $2\ell$ twists to the first band ($\ell\neq0$). 
	Then $K$ and $K(\ell,0)$ are $\Lambda_1$-equivalent if and only if $a_{22}=0$ and $(a_{12}+a_{21})\mid\ell$.
\end{theorem}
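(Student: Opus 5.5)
Throughout, write $M'=M-\ell E_{11}$ for the Seifert matrix of $K(\ell,0)$ in the corresponding band basis, and $s=a_{12}+a_{21}$. The plan is to reduce $\Lambda_1$-equivalence, i.e.\ the existence of $P\in GL_2(\mathbb{Z})$ with $P^TMP=M'$, to a question about binary quadratic forms.

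\emph{Step 1: reduction to $SL_2(\mathbb{Z})$ and to the symmetric part.} Since $K$ is a knot, $\det(M-M^T)=1$, so $a_{12}-a_{21}=\pm1$; in particular $s$ is odd and hence $s\neq 0$. Split $M$ into its antisymmetric part $\tfrac12(M-M^T)=\tfrac{a_{12}-a_{21}}{2}J$ and its symmetric part. For every $P$ one has $P^TJP=(\det P)J$. Moreover $M'$ has the same antisymmetric part as $M$, and $a_{12}-a_{21}\neq 0$. Hence any $P$ realizing the equivalence must satisfy $\det P=1$. Conversely, for $P\in SL_2(\mathbb{Z})$ the antisymmetric parts automatically agree. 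So $\Lambda_1$-equivalence is equivalent to proper equivalence of the quadratic forms
\[
f(x,y)=a_{11}x^2+sxy+a_{22}y^2 \quad\text{and}\quad f'(x,y)=(a_{11}-\ell)x^2+sxy+a_{22}y^2 .
\]

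\emph{Step 2: necessity of $a_{22}=0$, and sufficiency.} Properly equivalent forms have the same discriminant. Thus $s^2-4a_{11}a_{22}=s^2-4(a_{11}-\ell)a_{22}$, which gives $\ell a_{22}=0$, and so $a_{22}=0$ because $\ell\neq0$. For sufficiency, take $P=\begin{pmatrix}1&0\\k&1\end{pmatrix}$. A direct computation with $a_{22}=0$ gives
\[
P^TMP=\begin{pmatrix}a_{11}+ks & a_{12}\\ a_{21} & 0\end{pmatrix}.
\]
Choosing $k=-\ell/s\in\mathbb{Z}$ yields $M'$, so the stated conditions are sufficient.

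\emph{Step 3 (the main obstacle): necessity of $s\mid\ell$.} Now $f=x(a_{11}x+sy)$ has square discriminant $s^2\neq0$. Its primitive isotropic vectors are exactly $\pm(0,1)$ and $\pm w$, where $w=(s,-a_{11})/g$ and $g=\gcd(s,a_{11})$; the same description holds for $f'$. Since $f(Pe_2)=f'(e_2)=0$, the vector $Pe_2$ is primitive isotropic for $f$. This leaves two cases.

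\begin{itemize}
\item[(a)] $Pe_2=\pm e_2$. Then $\det P=1$ forces $P=\pm\begin{pmatrix}1&0\\k&1\end{pmatrix}$. Comparing $(1,1)$-entries via the computation in Step 2 gives $a_{11}-\ell=a_{11}+ks$, hence $s\mid\ell$.
\item[(b)] $Pe_2=\pm w$. Write $Pe_1=(u,v)$. The condition $\det P=1$ gives $a_{11}u+sv=\mp g$, so $f(Pe_1)=\mp gu$. Next compute the cross term
\[
f(Pe_1+Pe_2)-f(Pe_1)-f(Pe_2).
\]
Using $f(w)=0$ and expanding, this should come out to $-s$. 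Proper equivalence, however, requires this cross term to equal the coefficient $s$ of $f'$. Thus $s=-s$, i.e.\ $s=0$, contradicting the oddness of $s$.
\end{itemize}

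The careful sign bookkeeping in case (b) is where I expect the real work to lie. If the sign computation turns out differently, the fallback is to track how $P$ permutes the two isotropic lines together with their orientations relative to the antisymmetric part $J$. The point of that argument is that a proper (orientation-preserving) map cannot swap the two isotropic lines of a form with this sign pattern.
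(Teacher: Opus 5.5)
Your proof is correct and is essentially the paper's argument recast in the language of binary quadratic forms. The paper gets $\det T=1$ from the difference of the two off-diagonal entry equations, which is your antisymmetric part. It gets $a_{22}=0$ from $\det M=\det M'$, which is equivalent to your discriminant comparison since $s^2-4a_{11}a_{22}=1-4\det M$. It then splits on $b_{21}=0$ versus $b_{21}\neq0$, which is exactly your split $Pe_2=\pm e_2$ versus $Pe_2=\pm w$, and uses the same shear matrix (transposed) for sufficiency.

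Your hedged sign computation in case (b) does go through. With $Pe_2=\epsilon(s,-a_{11})/g$, the cross term is $\tfrac{\epsilon s}{g}(a_{11}u+sv)$, and $\det P=1$ forces $a_{11}u+sv=-\epsilon g$. So the cross term equals $-s\neq s$, exactly as you claimed.
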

	The necessity of $a_{22}=0$ follows from the Alexander polynomial; the necessity of $s\mid\ell$ for $\Lambda_1$-equivalence is established by Lemma~\ref{lem:sequiv}, and sufficiency is the explicit $\Lambda_1$-operation of Lemma~\ref{lem:sequiv}. By Lemma~\ref{lem:jones}, when $V(K)\neq 1$ the knots $K$ and $K(\ell,0)$ are $S$-equivalent but inequivalent; in particular they are distinct knots. This yields infinite families parametrized by $\ell$. For example, the knot $9_{46}$ (Rolfsen's table) has Seifert matrix $\begin{pmatrix}0&2\\1&0\end{pmatrix}$, so $s=3$. Since $\mathcal{S}^+_{9}$ is trivial, $K$ and $K(\ell,0)$ are $\Lambda_1$- and $S$-equivalent precisely when $3\mid\ell$, giving four distinct companions $K(\pm 3,0)$ and $K(0,\pm 3)$ all distinguished from $9_{46}$ by the Jones polynomial (Section~\ref{example}).

	 The pair of Seifert matrices of $K$ and $K(\ell,0)$ is therefore an explicit example of $S$-equivalent genus-one matrices of the same dimension carried by different knots.  The weaker notion of $S$-equivalence is characterized in Theorem~\ref{thm:sequiv-char}: it holds under a broader congruence governed by the subgroup $\mathcal{S}^+_{s^2}$ of Aka--Feller--Miller--Wieser \cite{AkaFellerMillerWieser2023}, so $\Lambda_1$-equivalence implies $S$-equivalence ($s=1,3$) but the converse fails once $\mathcal{S}^+_{s^2}$ is non-trivial (e.g.\ $s=5$). 
	
	As a corollary of Theorem \ref{thm:sequiv-char}, we obtain that 
	\begin{corollary}
		Let $a_{12}+a_{21}\in\{1,3\}$. For two Seifert matrices $M_1=\begin{pmatrix}a_{11}&a_{12}\\a_{21}&a_{22}\end{pmatrix}$ and $M_2=\begin{pmatrix}a_{11}-\ell&a_{12}\\a_{21}&a_{22}\end{pmatrix},\ell\neq 0$, if they are $S$-equivalent, then there shall exist a knot bounding Seifert surfaces $F_1$ and $F_2$~for which the associated Seifert matrices are $M_1$ and $M_2$.
	\end{corollary}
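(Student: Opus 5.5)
The plan is to reduce the corollary to Theorem~\ref{thm:main} plus the characterization in Theorem~\ref{thm:sequiv-char}, and then realize both matrices on a single knot. Write $s=a_{12}+a_{21}$. Since $M_1$ is the Seifert matrix of a knot, $\det(M_1-M_1^T)=s^2=1$ in absolute value; here I allow $s=3$ as the corollary does, with the understanding that $M_1$ is a Seifert matrix of a knot in the genus-one sense used throughout the paper.

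First, I would show that $S$-equivalence of $M_1$ and $M_2$ forces $a_{22}=0$. The Alexander polynomials $\det(M_i-tM_i^T)$ must agree. Expanding gives
\[
\Delta_{M_2}(t)=\Delta_{M_1}(t)-\ell a_{22}(1-t)^2 .
\]
Since $\ell\neq 0$, equality of the polynomials forces $a_{22}=0$.

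Second, I would invoke Theorem~\ref{thm:sequiv-char} with $s\in\{1,3\}$. Then $\mathcal{S}^+_{s^2}$ is trivial ($s^2=1$ or $9$), so the congruence condition collapses to $s\mid\ell$. Hence $S$-equivalence of $M_1$ and $M_2$ yields both $a_{22}=0$ and $s\mid \ell$.

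Third, by the sufficiency part of Theorem~\ref{thm:main}, these conditions give an explicit unimodular $P$ with $P^TM_1P=M_2$. I expect this to be the matrix
\[
P=\begin{pmatrix}1&0\\ k&1\end{pmatrix}, \qquad k=-\ell/s .
\]
I would check this directly. The $(1,1)$ entry of $P^TM_1P$ is $a_{11}+k(a_{12}+a_{21})+k^2a_{22}=a_{11}-\ell$. The $(1,2)$ entry is $a_{12}+ka_{22}=a_{12}$, the $(2,1)$ entry is $a_{21}+ka_{22}=a_{21}$, and the $(2,2)$ entry is $a_{22}$. All of these use $a_{22}=0$.

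Fourth, I would realize both matrices on one surface. Take a knot $K$ bounding a genus-one Seifert surface $F$ whose band basis $\{x_1,x_2\}$ gives $M_1$; such a $K$ exists by the standard realization of Seifert matrices via plumbed bands. Set $F_1=F_2=F$. Use the basis $\{x_1,x_2\}$ for $F_1$, and the basis $\{x_1+kx_2,\,x_2\}$ for $F_2$. The latter is an integral basis of $H_1(F)$ because $P$ is unimodular. The Seifert form in the new basis is $P^TM_1P=M_2$.

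The main obstacle is whether a change of basis on the same surface counts, or whether $F_2$ should be a genuinely different surface. If a geometrically distinct surface is wanted, I would represent $x_1+kx_2$ by an embedded curve and re-band the surface along it. The cleanest answer, though, is that the question only asks for Seifert matrices associated to surfaces, so the basis change suffices.
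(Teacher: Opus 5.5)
\textbf{Verdict: genuine gap, inherited from the paper.} Your route is the paper's own. You use Lemma~\ref{lem:a22=0} to get $a_{22}=0$, then Theorem~\ref{thm:sequiv-char} with $\mathcal{S}^+_{s^2}$ trivial (i.e.\ Corollary~\ref{cor:S-lambda1}) to get $s\mid\ell$, then the unimodular matrix of Lemma~\ref{lem:sequiv} read as a change of basis on a single surface. Steps 3 and 4 are fine.

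Step 1 is also fine, but you should say why the $\det(M_i-tM_i^T)$ agree exactly rather than only up to $\pm t^k$. Each $\Lambda_2$/$\Lambda_3$ enlargement multiplies this determinant by $t$, so for two $2\times 2$ matrices the factors cancel. The paper's case analysis in Lemma~\ref{lem:a22=0} exists to get around this point.

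The gap is in Step 2. Theorem~\ref{thm:sequiv-char} assumes $\gcd(a_{11},s)=1$; this is what makes $a_{11}x^2+sxy$ primitive, so that $\phi_s(a_{11})$ is defined. For $s=1$ the hypothesis is automatic. For $s=3$ it excludes every $a_{11}\equiv 0\pmod 3$, including $9_{46}$ itself, where $a_{11}=0$. For those matrices you have not shown that $S$-equivalence forces $3\mid\ell$. Corollary~\ref{cor:S-lambda1} drops the same hypothesis without comment, so the paper's derivation has the identical hole.

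The missing case can be closed with an elementary invariant. For $s=3$ and $a_{22}=0$,
\[
M_1+M_1^T=\begin{pmatrix}2a_{11}&3\\ 3&0\end{pmatrix}
\]
has determinant $-9$, and its entries have gcd equal to $\gcd(a_{11},3)$. So its cokernel is $\mathbb{Z}/9$ if $3\nmid a_{11}$ and $(\mathbb{Z}/3)^2$ if $3\mid a_{11}$.

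This cokernel is an $S$-equivalence invariant. $\Lambda_1$ acts by congruence, and an enlargement replaces $V+V^T$ by a matrix congruent to $(V+V^T)\oplus\begin{pmatrix}0&1\\1&0\end{pmatrix}$. Hence $3\mid a_{11}$ forces $3\mid a_{11}-\ell$, i.e.\ $3\mid\ell$. Lemma~\ref{lem:sequiv}, which needs no coprimality, then gives the $\Lambda_1$-equivalence.

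The linking form on this cokernel even recovers the coprime case without appealing to AFMW. When $3\nmid a_{11}$ it is $-2a_{11}/9$ on a generator. Since the squares in $(\mathbb{Z}/9)^\times$ are exactly the units $\equiv 1\pmod 3$, its isomorphism class detects $a_{11}\bmod 3$.

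Finally, your opening remark is incorrect: $\det(M_1-M_1^T)=(a_{12}-a_{21})^2=1$, not $s^2$. The condition only forces $s$ to be odd, so $s=3$ needs no caveat. Nothing later depends on that remark.
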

   Such pairs are exactly what the surjectivity direction of Problem~1.6 (Kirby's list K3) asks about: they form an intermediate step toward determining whether a single knot can bound two Seifert surfaces with prescribed $S$-equivalent matrices when $a_{12}+a_{21}\in\{1,3\}$. But whether these two matrices $(a_{12}+a_{21}>3)$ can be realized by the same knot remains an open question.
   
	Our results build on the framework of Aka, Feller, Miller, and Wieser \cite{AkaFellerMillerWieser2023}, who characterized (via Gauss composition of binary quadratic forms) the pairs of Seifert matrices arising from \emph{disjoint} genus-one Seifert surfaces with the same boundary: $s_1$ and $s_2$ arise from such a disjoint pair if and only if $[ax^2+xy+cy^2]^2 * s_1 = s_2$ for some $a, c$ (Theorem~1.1 of \cite{AkaFellerMillerWieser2023}). Our contribution is: we give an explicit geometric band-twist operation and pin down exactly when it produces $\Lambda_1$-equivalent (respectively $S$-equivalent) Seifert matrices, thereby exhibiting concrete genus-one pairs where $S$-equivalence strictly exceeds $\Lambda_1$-equivalence.
			
	The paper is organized as follows. In Section~\ref{construction}, we define the twist operation, prove the $S$-equivalence characterization (Lemma~\ref{lem:a22=0} and Theorem~\ref{thm:sequiv-char}); and establish the Jones polynomial distinction (Lemma~\ref{lem:jones}). In Section~\ref{example}, we give explicit examples. In Section~\ref{high-genus}, we extend the construction to higher genus via connected sums. 
	\section{The Main Construction}\label{construction}

Let $K$ be a genus-one knot whose Seifert matrix is denoted by $M$. The minimal genus Seifert surface of $K$ can be viewed as a disk with two bands attached, drawn as follows (see Figure~\ref{fig:seifert-surface}). Here the dotted lines indicate crossings whose configuration is unspecified.

	\begin{figure}[H]
		\centering

		\tikzset{every picture/.style={line width=0.75pt}} 
		
		\begin{tikzpicture}[x=0.75pt,y=0.75pt,yscale=-0.55,xscale=0.55]
			
			\draw    (114.68,184.64) .. controls (147.48,329.2) and (424.96,275.84) .. (479,152.11) ;
			\draw [shift={(305.74,266.24)}, rotate = 169.85] [color={rgb, 255:red, 0; green, 0; blue, 0 }  ][line width=0.75]    (10.93,-3.29) .. controls (6.95,-1.4) and (3.31,-0.3) .. (0,0) .. controls (3.31,0.3) and (6.95,1.4) .. (10.93,3.29)   ;
			\draw  [dash pattern={on 4.5pt off 4.5pt}]  (278.89,112.34) .. controls (295.38,83.77) and (321.02,43.45) .. (389.54,72.2) ;
			\draw  [dash pattern={on 4.5pt off 4.5pt}]  (294.69,119.07) .. controls (303.86,101.14) and (333.92,58.26) .. (376.81,92.87) ;
			\draw    (191.25,159.27) -- (251.14,154.16) ;
			\draw    (278.89,155.18) -- (305.98,153.77) ;
			\draw    (332.93,154.16) -- (392.82,149.04) ;
			\draw    (420.57,150.07) -- (479,152.11) ;
			\draw  [dash pattern={on 4.5pt off 4.5pt}]  (251.14,154.16) .. controls (257.82,147.69) and (254.31,146.85) .. (266.6,135.91) ;
			\draw  [dash pattern={on 4.5pt off 4.5pt}]  (278.89,155.18) .. controls (281.52,150.22) and (279.77,151.06) .. (283.28,145.17) ;
			\draw    (114.68,184.64) .. controls (105.35,137.51) and (134.19,167.28) .. (164.29,158.88) ;
			\draw  [dash pattern={on 4.5pt off 4.5pt}]  (389.54,72.2) .. controls (411.6,68.89) and (427.71,116.84) .. (420.57,150.07) ;
			\draw  [dash pattern={on 4.5pt off 4.5pt}]  (376.81,92.87) .. controls (398.87,89.56) and (399.96,115.82) .. (392.82,149.04) ;
			\draw  [dash pattern={on 4.5pt off 4.5pt}]  (249.57,106.1) .. controls (270.77,102.79) and (323.37,137.51) .. (332.93,154.16) ;
			\draw  [dash pattern={on 4.5pt off 4.5pt}]  (241.93,119.32) .. controls (267.38,116.02) and (291.13,143.3) .. (305.98,153.77) ;
			\draw  [dash pattern={on 4.5pt off 4.5pt}]  (164.29,158.88) .. controls (166.43,133.38) and (164.73,125.94) .. (198.67,101.14) ;
			\draw  [dash pattern={on 4.5pt off 4.5pt}]  (191.25,159.27) .. controls (185.94,139.99) and (168.97,146.61) .. (202.91,121.8) ;
			\draw  [dash pattern={on 4.5pt off 4.5pt}]  (198.67,101.14) .. controls (232.6,76.33) and (237.69,101.96) .. (249.57,106.1) ;
			\draw  [dash pattern={on 4.5pt off 4.5pt}]  (202.91,121.8) .. controls (209.69,112.3) and (240.23,112.71) .. (241.93,119.32) ;

		\end{tikzpicture}
		
		\caption{Seifert surface of a genus-one knot}
		\label{fig:seifert-surface}

	\end{figure}

	We now define an operation on a band of the Seifert surface. For convenience, the left band is called the \emph{first band} and the other is called the \emph{second band}. Denote the closed curve through the first band (second band, respectively) by $\alpha_1$ ($\alpha_2$, respectively) when computing the Seifert matrix. Orient $\alpha_1$ and $\alpha_2$ counterclockwise. 
 See Figure \ref{fig:closed-curve}.
	\begin{figure}[H]
		\centering

		\tikzset{every picture/.style={line width=0.75pt}} 
		
		\begin{tikzpicture}[x=0.75pt,y=0.75pt,yscale=-0.55,xscale=0.55]
			
			\draw    (94.68,164.64) .. controls (127.48,309.2) and (404.96,255.84) .. (459,132.11) ;
			\draw [shift={(285.74,246.24)}, rotate = 169.85] [color={rgb, 255:red, 0; green, 0; blue, 0 }  ][line width=0.75]    (10.93,-3.29) .. controls (6.95,-1.4) and (3.31,-0.3) .. (0,0) .. controls (3.31,0.3) and (6.95,1.4) .. (10.93,3.29)   ;
			\draw  [dash pattern={on 4.5pt off 4.5pt}]  (258.89,92.34) .. controls (275.38,63.77) and (301.02,23.45) .. (369.54,52.2) ;
			\draw  [dash pattern={on 4.5pt off 4.5pt}]  (274.69,99.07) .. controls (283.86,81.14) and (313.92,38.26) .. (356.81,72.87) ;
			\draw    (171.25,139.27) -- (231.14,134.16) ;
			\draw    (258.89,135.18) -- (285.98,133.77) ;
			\draw    (312.93,134.16) -- (372.82,129.04) ;
			\draw    (400.57,130.07) -- (459,132.11) ;
			\draw  [dash pattern={on 4.5pt off 4.5pt}]  (231.14,134.16) .. controls (237.82,127.69) and (234.31,126.85) .. (246.6,115.91) ;
			\draw  [dash pattern={on 4.5pt off 4.5pt}]  (258.89,135.18) .. controls (261.52,130.22) and (259.77,131.06) .. (263.28,125.17) ;
			\draw    (94.68,164.64) .. controls (85.35,117.51) and (114.19,147.28) .. (144.29,138.88) ;
			\draw  [dash pattern={on 4.5pt off 4.5pt}]  (369.54,52.2) .. controls (391.6,48.89) and (407.71,96.84) .. (400.57,130.07) ;
			\draw  [dash pattern={on 4.5pt off 4.5pt}]  (356.81,72.87) .. controls (378.87,69.56) and (379.96,95.82) .. (372.82,129.04) ;
			\draw  [dash pattern={on 4.5pt off 4.5pt}]  (229.57,86.1) .. controls (250.77,82.79) and (303.37,117.51) .. (312.93,134.16) ;
			\draw  [dash pattern={on 4.5pt off 4.5pt}]  (221.93,99.32) .. controls (247.38,96.02) and (271.13,123.3) .. (285.98,133.77) ;
			\draw  [dash pattern={on 4.5pt off 4.5pt}]  (144.29,138.88) .. controls (146.43,113.38) and (144.73,105.94) .. (178.67,81.14) ;
			\draw  [dash pattern={on 4.5pt off 4.5pt}]  (171.25,139.27) .. controls (165.94,119.99) and (148.97,126.61) .. (182.91,101.8) ;
			\draw  [dash pattern={on 4.5pt off 4.5pt}]  (178.67,81.14) .. controls (212.6,56.33) and (217.69,81.96) .. (229.57,86.1) ;
			\draw  [dash pattern={on 4.5pt off 4.5pt}]  (182.91,101.8) .. controls (189.69,92.3) and (220.23,92.71) .. (221.93,99.32) ;
			\draw    (297,137) .. controls (246,266) and (146,173) .. (154,138) ;
			\draw [shift={(231.18,202.98)}, rotate = 177.32] [color={rgb, 255:red, 0; green, 0; blue, 0 }  ][line width=0.75]    (10.93,-3.29) .. controls (6.95,-1.4) and (3.31,-0.3) .. (0,0) .. controls (3.31,0.3) and (6.95,1.4) .. (10.93,3.29)   ;
			\draw    (388,138) .. controls (337,267) and (237,174) .. (245,139) ;
			\draw [shift={(322.18,203.98)}, rotate = 177.32] [color={rgb, 255:red, 0; green, 0; blue, 0 }  ][line width=0.75]    (10.93,-3.29) .. controls (6.95,-1.4) and (3.31,-0.3) .. (0,0) .. controls (3.31,0.3) and (6.95,1.4) .. (10.93,3.29)   ;
			\draw  [dash pattern={on 0.84pt off 2.51pt}]  (154,138) .. controls (161,77) and (226,60) .. (297,137) ;
			\draw  [dash pattern={on 0.84pt off 2.51pt}]  (245,139) .. controls (249,66) and (400,-16) .. (388,138) ;
			
			\draw (192,199.4) node [anchor=north west][inner sep=0.75pt]    {$\alpha _{1}$};
			\draw (291,205.4) node [anchor=north west][inner sep=0.75pt]    {$\alpha _{2}$};

		\end{tikzpicture}

		\caption{$\alpha_1$ and $\alpha_2$}
		\label{fig:closed-curve}
	\end{figure}

	Cut and move a no-twist part of the first band, and attach $2\ell$ twists on the same band. Denote the resulting knot by $K(\ell,0)$. If the operation is performed on the second band, denote the resulting knot by $K(0,\ell)$. Write $K(0,0) = K$. The genus is unchanged by the operation.
	
	Here $\ell > 0$ means the attached twist is positive, and $\ell < 0$ means the attached twist is negative, once the knot has been given an orientation. See Figure \ref{fig:twist}.

	\begin{figure}[H]
		\centering

		\tikzset{every picture/.style={line width=0.75pt}} 
		
		\begin{tikzpicture}[x=0.75pt,y=0.75pt,yscale=-0.6,xscale=0.6]
			
			\draw    (61,177) .. controls (105,163) and (131,249) .. (174,237) ;
			\draw [shift={(113.18,201.35)}, rotate = 46.21] [color={rgb, 255:red, 0; green, 0; blue, 0 }  ][line width=0.75]    (10.93,-3.29) .. controls (6.95,-1.4) and (3.31,-0.3) .. (0,0) .. controls (3.31,0.3) and (6.95,1.4) .. (10.93,3.29)   ;
			\draw    (64,230) .. controls (101,231) and (97,233) .. (117,210) ;
			\draw    (127,199) .. controls (136,194) and (135,179) .. (180,178) ;
			\draw [shift={(156.8,180.35)}, rotate = 165.4] [color={rgb, 255:red, 0; green, 0; blue, 0 }  ][line width=0.75]    (10.93,-3.29) .. controls (6.95,-1.4) and (3.31,-0.3) .. (0,0) .. controls (3.31,0.3) and (6.95,1.4) .. (10.93,3.29)   ;
			\draw    (485,164) .. controls (416,166) and (443,226) .. (360,225) ;
			\draw [shift={(429.21,193.81)}, rotate = 129.18] [color={rgb, 255:red, 0; green, 0; blue, 0 }  ][line width=0.75]    (10.93,-4.9) .. controls (6.95,-2.3) and (3.31,-0.67) .. (0,0) .. controls (3.31,0.67) and (6.95,2.3) .. (10.93,4.9)   ;
			\draw    (355,167) .. controls (412,165) and (409,176) .. (421,187) ;
			\draw    (430.65,202.59) .. controls (435.7,211.57) and (442,233) .. (489,225) ;
			\draw [shift={(448.4,222.61)}, rotate = 20.57] [color={rgb, 255:red, 0; green, 0; blue, 0 }  ][line width=0.75]    (10.93,-3.29) .. controls (6.95,-1.4) and (3.31,-0.3) .. (0,0) .. controls (3.31,0.3) and (6.95,1.4) .. (10.93,3.29)   ;
			\draw    (67,39) -- (285,38) ;
			\draw [shift={(169,38.53)}, rotate = 359.74] [color={rgb, 255:red, 0; green, 0; blue, 0 }  ][line width=0.75]    (10.93,-3.29) .. controls (6.95,-1.4) and (3.31,-0.3) .. (0,0) .. controls (3.31,0.3) and (6.95,1.4) .. (10.93,3.29)   ;
			\draw    (70,85) -- (281,83) ;
			\draw [shift={(181.5,83.94)}, rotate = 179.46] [color={rgb, 255:red, 0; green, 0; blue, 0 }  ][line width=0.75]    (10.93,-3.29) .. controls (6.95,-1.4) and (3.31,-0.3) .. (0,0) .. controls (3.31,0.3) and (6.95,1.4) .. (10.93,3.29)   ;
			\draw    (362,36) -- (614,35) ;
			\draw [shift={(481,35.53)}, rotate = 359.77] [color={rgb, 255:red, 0; green, 0; blue, 0 }  ][line width=0.75]    (10.93,-3.29) .. controls (6.95,-1.4) and (3.31,-0.3) .. (0,0) .. controls (3.31,0.3) and (6.95,1.4) .. (10.93,3.29)   ;
			\draw    (364,84) -- (613,83) ;
			\draw [shift={(494.5,83.48)}, rotate = 179.77] [color={rgb, 255:red, 0; green, 0; blue, 0 }  ][line width=0.75]    (10.93,-3.29) .. controls (6.95,-1.4) and (3.31,-0.3) .. (0,0) .. controls (3.31,0.3) and (6.95,1.4) .. (10.93,3.29)   ;
			\draw  [dash pattern={on 4.5pt off 4.5pt}] (38,22) -- (305,22) -- (305,104) -- (38,104) -- cycle ;
			\draw  [dash pattern={on 4.5pt off 4.5pt}] (38.5,158) -- (311,158) -- (311,245) -- (38.5,245) -- cycle ;
			\draw  [dash pattern={on 4.5pt off 4.5pt}] (348,20) -- (646,20) -- (646,102) -- (348,102) -- cycle ;
			\draw  [dash pattern={on 4.5pt off 4.5pt}] (346,155) -- (645,155) -- (645,237) -- (346,237) -- cycle ;
			\draw    (182,115) -- (182,151) ;
			\draw [shift={(182,153)}, rotate = 270] [color={rgb, 255:red, 0; green, 0; blue, 0 }  ][line width=0.75]    (10.93,-3.29) .. controls (6.95,-1.4) and (3.31,-0.3) .. (0,0) .. controls (3.31,0.3) and (6.95,1.4) .. (10.93,3.29)   ;
			\draw    (426,110) -- (426.95,146) ;
			\draw [shift={(427,148)}, rotate = 268.49] [color={rgb, 255:red, 0; green, 0; blue, 0 }  ][line width=0.75]    (10.93,-3.29) .. controls (6.95,-1.4) and (3.31,-0.3) .. (0,0) .. controls (3.31,0.3) and (6.95,1.4) .. (10.93,3.29)   ;
			\draw    (180,178) .. controls (223,171) and (250,247) .. (290,231) ;
			\draw [shift={(239.2,210.76)}, rotate = 223.18] [color={rgb, 255:red, 0; green, 0; blue, 0 }  ][line width=0.75]    (10.93,-3.29) .. controls (6.95,-1.4) and (3.31,-0.3) .. (0,0) .. controls (3.31,0.3) and (6.95,1.4) .. (10.93,3.29)   ;
			\draw    (174,237) .. controls (212,236) and (208,233) .. (228,210) ;
			\draw    (238,199) .. controls (247,194) and (247,175) .. (291,178) ;
			\draw [shift={(254.92,183.2)}, rotate = 335.79] [color={rgb, 255:red, 0; green, 0; blue, 0 }  ][line width=0.75]    (10.93,-4.9) .. controls (6.95,-2.3) and (3.31,-0.67) .. (0,0) .. controls (3.31,0.67) and (6.95,2.3) .. (10.93,4.9)   ;
			\draw    (609,162) .. controls (540,164) and (572,226) .. (489,225) ;
			\draw [shift={(547.98,202.94)}, rotate = 308.04] [color={rgb, 255:red, 0; green, 0; blue, 0 }  ][line width=0.75]    (10.93,-3.29) .. controls (6.95,-1.4) and (3.31,-0.3) .. (0,0) .. controls (3.31,0.3) and (6.95,1.4) .. (10.93,3.29)   ;
			\draw    (479,165) .. controls (536,163) and (533,174) .. (545,185) ;
			\draw    (554.65,200.59) .. controls (559.7,209.57) and (561,230) .. (608,222) ;
			\draw [shift={(582.22,223.49)}, rotate = 189.43] [color={rgb, 255:red, 0; green, 0; blue, 0 }  ][line width=0.75]    (10.93,-3.29) .. controls (6.95,-1.4) and (3.31,-0.3) .. (0,0) .. controls (3.31,0.3) and (6.95,1.4) .. (10.93,3.29)   ;
			
			\draw (135,260.4) node [anchor=north west][inner sep=0.75pt]    {$add\ -2\ twists$};
			\draw (443,259.4) node [anchor=north west][inner sep=0.75pt]    {$add\ +2\ twists$};

		\end{tikzpicture}

		\caption{add positive twists and negative twists}
		\label{fig:twist}
	\end{figure}

For example, the operation from $K$ to $K(-1,0)$ is illustrated in Figure \ref{fig:example-operation}.
	
	\begin{figure}[H]

		\centering
		\tikzset{every picture/.style={line width=0.75pt}} 
		
		\begin{tikzpicture}[x=0.75pt,y=0.75pt,yscale=-0.8,xscale=0.8]
			
			\draw    (422.72,89.33) .. controls (437.87,166.17) and (566.04,137.81) .. (591,72.04) ;
			\draw [shift={(514.31,131.92)}, rotate = 167.34] [color={rgb, 255:red, 0; green, 0; blue, 0 }  ][line width=0.75]    (10.93,-3.29) .. controls (6.95,-1.4) and (3.31,-0.3) .. (0,0) .. controls (3.31,0.3) and (6.95,1.4) .. (10.93,3.29)   ;
			\draw  [dash pattern={on 4.5pt off 4.5pt}]  (498.57,50.9) .. controls (506.18,35.72) and (518.03,14.29) .. (549.68,29.57) ;
			\draw  [dash pattern={on 4.5pt off 4.5pt}]  (505.86,54.48) .. controls (510.1,44.95) and (523.99,22.16) .. (543.8,40.55) ;
			\draw    (458.09,75.85) -- (485.75,73.13) ;
			\draw    (498.57,73.67) -- (511.08,72.92) ;
			\draw    (523.53,73.13) -- (551.19,70.41) ;
			\draw    (564.01,70.96) -- (591,72.04) ;
			\draw  [dash pattern={on 4.5pt off 4.5pt}]  (485.75,73.13) .. controls (488.84,69.69) and (487.22,69.25) .. (492.89,63.43) ;
			\draw  [dash pattern={on 4.5pt off 4.5pt}]  (498.57,73.67) .. controls (499.78,71.04) and (498.97,71.48) .. (500.59,68.35) ;
			\draw    (422.72,89.33) .. controls (418.41,64.28) and (431.73,80.1) .. (445.64,75.64) ;
			\draw  [dash pattern={on 4.5pt off 4.5pt}]  (549.68,29.57) .. controls (559.87,27.81) and (567.31,53.3) .. (564.01,70.96) ;
			\draw  [dash pattern={on 4.5pt off 4.5pt}]  (543.8,40.55) .. controls (553.99,38.8) and (554.49,52.75) .. (551.19,70.41) ;
			\draw  [dash pattern={on 4.5pt off 4.5pt}]  (485.02,47.59) .. controls (494.82,45.83) and (519.11,64.28) .. (523.53,73.13) ;
			\draw  [dash pattern={on 4.5pt off 4.5pt}]  (481.5,54.62) .. controls (493.25,52.86) and (504.22,67.36) .. (511.08,72.92) ;
			\draw  [dash pattern={on 4.5pt off 4.5pt}]  (445.64,75.64) .. controls (446.62,62.09) and (445.84,58.13) .. (461.51,44.95) ;
			\draw  [dash pattern={on 4.5pt off 4.5pt}]  (458.09,75.85) .. controls (455.64,65.6) and (447.8,69.12) .. (463.47,55.93) ;
			\draw    (463.47,43.85) .. controls (479.15,30.67) and (477.19,44.51) .. (482.67,46.71) ;
			\draw    (465.82,53.96) .. controls (468.96,48.9) and (478.75,52.86) .. (479.54,56.37) ;
			\draw    (462.3,36.6) -- (442.56,27.34) ;
			\draw [shift={(440.75,26.49)}, rotate = 25.13] [color={rgb, 255:red, 0; green, 0; blue, 0 }  ][line width=0.75]    (10.93,-3.29) .. controls (6.95,-1.4) and (3.31,-0.3) .. (0,0) .. controls (3.31,0.3) and (6.95,1.4) .. (10.93,3.29)   ;
			\draw  [dash pattern={on 4.5pt off 4.5pt}] (463.47,34.4) -- (482.67,34.4) -- (482.67,55.93) -- (463.47,55.93) -- cycle ;
			\draw    (427.81,309.78) .. controls (442.68,387.54) and (568.49,358.83) .. (593,292.29) ;
			\draw [shift={(517.71,352.88)}, rotate = 166.96] [color={rgb, 255:red, 0; green, 0; blue, 0 }  ][line width=0.75]    (10.93,-3.29) .. controls (6.95,-1.4) and (3.31,-0.3) .. (0,0) .. controls (3.31,0.3) and (6.95,1.4) .. (10.93,3.29)   ;
			\draw    (454.73,239.52) .. controls (459.74,220.85) and (467.36,234.27) .. (477.89,231.53) ;
			\draw    (458.58,242.19) .. controls (463.2,236.85) and (459.74,234.19) .. (463.85,231.18) ;
			\draw  [dash pattern={on 4.5pt off 4.5pt}]  (502.26,270.9) .. controls (509.74,255.53) and (535,217.5) .. (552.44,249.31) ;
			\draw  [dash pattern={on 4.5pt off 4.5pt}]  (509.43,274.52) .. controls (513.59,264.87) and (534,235.5) .. (546.67,260.42) ;
			\draw    (462.53,296.14) -- (489.68,293.39) ;
			\draw    (502.27,293.94) -- (514.55,293.18) ;
			\draw    (526.77,293.39) -- (553.92,290.64) ;
			\draw    (566.51,291.19) -- (593,292.29) ;
			\draw    (465.94,228.84) .. controls (473.11,220.22) and (481.79,230.67) .. (488.58,236.41) ;
			\draw    (483.32,229.47) .. controls (493.2,231.52) and (488.58,229.3) .. (494.74,232.41) ;
			\draw  [dash pattern={on 4.5pt off 4.5pt}]  (489.68,293.39) .. controls (492.71,289.91) and (491.12,289.46) .. (496.69,283.57) ;
			\draw  [dash pattern={on 4.5pt off 4.5pt}]  (502.27,293.94) .. controls (503.46,291.27) and (502.66,291.72) .. (504.25,288.55) ;
			\draw    (427.81,309.78) .. controls (423.58,284.43) and (436.66,300.44) .. (450.3,295.93) ;
			\draw  [dash pattern={on 4.5pt off 4.5pt}]  (552.44,249.31) .. controls (562.44,247.53) and (569.75,273.32) .. (566.51,291.19) ;
			\draw  [dash pattern={on 4.5pt off 4.5pt}]  (546.67,260.42) .. controls (556.67,258.64) and (557.16,272.77) .. (553.92,290.64) ;
			\draw  [dash pattern={on 4.5pt off 4.5pt}]  (488.97,267.54) .. controls (498.59,265.76) and (522.43,284.43) .. (526.77,293.39) ;
			\draw  [dash pattern={on 4.5pt off 4.5pt}]  (485.51,274.65) .. controls (497.05,272.87) and (507.82,287.55) .. (514.55,293.18) ;
			\draw  [dash pattern={on 4.5pt off 4.5pt}]  (450.3,295.93) .. controls (451.27,282.21) and (450.5,278.21) .. (465.89,264.87) ;
			\draw  [dash pattern={on 4.5pt off 4.5pt}]  (462.53,296.14) .. controls (460.12,285.77) and (452.43,289.33) .. (467.81,275.99) ;
			\draw    (474.14,246.04) -- (475.98,261.6) ;
			\draw [shift={(476.22,263.59)}, rotate = 263.24] [color={rgb, 255:red, 0; green, 0; blue, 0 }  ][line width=0.75]    (10.93,-3.29) .. controls (6.95,-1.4) and (3.31,-0.3) .. (0,0) .. controls (3.31,0.3) and (6.95,1.4) .. (10.93,3.29)   ;
			\draw  [dash pattern={on 4.5pt off 4.5pt}] (454.43,220.29) -- (500.48,220.29) -- (500.48,245.31) -- (454.43,245.31) -- cycle ;
			\draw    (77.81,301.78) .. controls (92.68,379.54) and (218.49,350.83) .. (243,284.29) ;
			\draw [shift={(167.71,344.88)}, rotate = 166.96] [color={rgb, 255:red, 0; green, 0; blue, 0 }  ][line width=0.75]    (10.93,-3.29) .. controls (6.95,-1.4) and (3.31,-0.3) .. (0,0) .. controls (3.31,0.3) and (6.95,1.4) .. (10.93,3.29)   ;
			\draw    (110,260.5) .. controls (87,198.5) and (117.36,226.27) .. (127.89,223.53) ;
			\draw    (117.81,267.99) .. controls (122.43,262.65) and (109.74,226.19) .. (113.85,223.18) ;
			\draw  [dash pattern={on 4.5pt off 4.5pt}]  (152.26,262.9) .. controls (159.74,247.53) and (179,219.5) .. (202.44,241.31) ;
			\draw  [dash pattern={on 4.5pt off 4.5pt}]  (159.43,266.52) .. controls (163.59,256.87) and (178,229.5) .. (196.67,252.42) ;
			\draw    (112.53,288.14) -- (139.68,285.39) ;
			\draw    (152.27,285.94) -- (164.55,285.18) ;
			\draw    (176.77,285.39) -- (203.92,282.64) ;
			\draw    (216.51,283.19) -- (243,284.29) ;
			\draw    (115.94,220.84) .. controls (142,202.5) and (128.71,260.91) .. (135.51,266.65) ;
			\draw    (133.32,221.47) .. controls (149,220.5) and (132.81,256.42) .. (138.97,259.54) ;
			\draw  [dash pattern={on 4.5pt off 4.5pt}]  (139.68,285.39) .. controls (142.71,281.91) and (141.12,281.46) .. (146.69,275.57) ;
			\draw  [dash pattern={on 4.5pt off 4.5pt}]  (152.27,285.94) .. controls (153.46,283.27) and (152.66,283.72) .. (154.25,280.55) ;
			\draw    (77.81,301.78) .. controls (73.58,276.43) and (86.66,292.44) .. (100.3,287.93) ;
			\draw  [dash pattern={on 4.5pt off 4.5pt}]  (202.44,241.31) .. controls (212.44,239.53) and (219.75,265.32) .. (216.51,283.19) ;
			\draw  [dash pattern={on 4.5pt off 4.5pt}]  (196.67,252.42) .. controls (206.67,250.64) and (207.16,264.77) .. (203.92,282.64) ;
			\draw  [dash pattern={on 4.5pt off 4.5pt}]  (138.97,259.54) .. controls (148.59,257.76) and (172.43,276.43) .. (176.77,285.39) ;
			\draw  [dash pattern={on 4.5pt off 4.5pt}]  (135.51,266.65) .. controls (147.05,264.87) and (157.82,279.55) .. (164.55,285.18) ;
			\draw  [dash pattern={on 4.5pt off 4.5pt}]  (100.3,287.93) .. controls (101.27,274.21) and (94.61,273.84) .. (110,260.5) ;
			\draw  [dash pattern={on 4.5pt off 4.5pt}]  (112.53,288.14) .. controls (110.12,277.77) and (102.43,281.33) .. (117.81,267.99) ;
			\draw    (73.72,87.33) .. controls (88.87,164.17) and (217.04,135.81) .. (242,70.04) ;
			\draw [shift={(165.31,129.92)}, rotate = 167.34] [color={rgb, 255:red, 0; green, 0; blue, 0 }  ][line width=0.75]    (10.93,-3.29) .. controls (6.95,-1.4) and (3.31,-0.3) .. (0,0) .. controls (3.31,0.3) and (6.95,1.4) .. (10.93,3.29)   ;
			\draw  [dash pattern={on 4.5pt off 4.5pt}]  (149.57,48.9) .. controls (157.18,33.72) and (169.03,12.29) .. (200.68,27.57) ;
			\draw  [dash pattern={on 4.5pt off 4.5pt}]  (156.86,52.48) .. controls (161.1,42.95) and (174.99,20.16) .. (194.8,38.55) ;
			\draw    (109.09,73.85) -- (136.75,71.13) ;
			\draw    (149.57,71.67) -- (162.08,70.92) ;
			\draw    (174.53,71.13) -- (202.19,68.41) ;
			\draw    (215.01,68.96) -- (242,70.04) ;
			\draw  [dash pattern={on 4.5pt off 4.5pt}]  (136.75,71.13) .. controls (139.84,67.69) and (138.22,67.25) .. (143.89,61.43) ;
			\draw  [dash pattern={on 4.5pt off 4.5pt}]  (149.57,71.67) .. controls (150.78,69.04) and (149.97,69.48) .. (151.59,66.35) ;
			\draw    (73.72,87.33) .. controls (69.41,62.28) and (82.73,78.1) .. (96.64,73.64) ;
			\draw  [dash pattern={on 4.5pt off 4.5pt}]  (200.68,27.57) .. controls (210.87,25.81) and (218.31,51.3) .. (215.01,68.96) ;
			\draw  [dash pattern={on 4.5pt off 4.5pt}]  (194.8,38.55) .. controls (204.99,36.8) and (205.49,50.75) .. (202.19,68.41) ;
			\draw  [dash pattern={on 4.5pt off 4.5pt}]  (136.02,45.59) .. controls (145.82,43.83) and (170.11,62.28) .. (174.53,71.13) ;
			\draw  [dash pattern={on 4.5pt off 4.5pt}]  (132.5,52.62) .. controls (144.25,50.86) and (155.22,65.36) .. (162.08,70.92) ;
			\draw  [dash pattern={on 4.5pt off 4.5pt}]  (96.64,73.64) .. controls (97.62,60.09) and (96.84,56.13) .. (112.51,42.95) ;
			\draw  [dash pattern={on 4.5pt off 4.5pt}]  (109.09,73.85) .. controls (106.64,63.6) and (98.8,67.12) .. (114.47,53.93) ;
			\draw    (112.51,42.95) .. controls (128.19,29.77) and (130.54,43.39) .. (136.02,45.59) ;
			\draw    (114.47,53.93) .. controls (117.61,48.88) and (131.71,49.1) .. (132.5,52.62) ;
			\draw   (282,79) -- (324,79) -- (324,69) -- (352,89) -- (324,109) -- (324,99) -- (282,99) -- cycle ;
			\draw   (482,175) -- (499.5,175) -- (499.5,151) -- (534.5,151) -- (534.5,175) -- (552,175) -- (517,191) -- cycle ;
			\draw   (297,305) -- (325,285) -- (325,295) -- (367,295) -- (367,315) -- (325,315) -- (325,325) -- cycle ;
			
			\draw (140,145.4) node [anchor=north west][inner sep=0.75pt]    {$K$};
			\draw (140,356.4) node [anchor=north west][inner sep=0.75pt]    {$K(-1,0)$};

		\end{tikzpicture}

		\caption{The operation from $K$ to $K(-1,0)$}
		\label{fig:example-operation}
		
	\end{figure}

If the operation is performed on different bands, yielding $K(\ell,0)$ and $K(0,\ell)$, whether the two are equivalent depends on the position of the no-twist part that was moved. We focus on the first band throughout.

Given a genus-one knot $K$ and the knot $K(\ell,0)$ obtained after the operation, with their respective minimal genus Seifert surfaces $S$ and $S'$ (each a disk attaching two bands), the Seifert matrix of $K$ is
$
M = \begin{pmatrix} a_{11} & a_{12} \\ a_{21} & a_{22} \end{pmatrix}
$
and that of $K(\ell,0)$ is
$
M' = \begin{pmatrix} a_{11} - \ell & a_{12} \\ a_{21} & a_{22} \end{pmatrix}, 
$
where $\ell\ne 0$.

We will establish the $S$-equivalence of $M $ and $M'$ under suitable conditions.

\begin{lemma}\label{lem:sequiv}
 $M$ and $M'$ are $\Lambda_1$-equivalent (i.e., $TMT^t = M'$ for some $T \in GL_2(\mathbb{Z})$) if and only if $a_{22}=0$ and $(a_{12}+a_{21}) \mid \ell$.
\end{lemma}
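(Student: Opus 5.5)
The plan is to treat the two directions separately. Sufficiency comes from an explicit shear, and necessity from invariants of $\Lambda_1$-equivalence together with an analysis of binary quadratic forms. Throughout, write $s=a_{12}+a_{21}$. Since $M-M^t$ is the intersection form of a genus-one surface, $\det(M-M^t)=1$, so $a_{12}-a_{21}=\pm1$. Hence $s$ is odd, in particular $s\neq 0$, and the divisibility condition makes sense.

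\emph{Sufficiency.} Assume $a_{22}=0$ and $\ell=-ks$ with $k\in\mathbb{Z}$, and take $T=\begin{pmatrix}1&k\\0&1\end{pmatrix}$. The first row of $T$ corresponds to the vector $e_1+ke_2$. A direct computation gives
\[
(TMT^t)_{11}=a_{11}+ks+k^2a_{22},\qquad (TMT^t)_{12}=a_{12}+ka_{22},\qquad (TMT^t)_{21}=a_{21}+ka_{22},
\]
and $(TMT^t)_{22}=a_{22}$. With $a_{22}=0$ this is exactly $M'$. Geometrically, this is sliding the first band over the second $k$ times.

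\emph{Necessity.} Suppose $TMT^t=M'$ with $T\in GL_2(\mathbb{Z})$.
\begin{itemize}
\item Taking determinants gives $\det M=\det M'=\det M-\ell a_{22}$. Since $\ell\neq0$, this forces $a_{22}=0$; this is the Alexander-polynomial obstruction mentioned after Theorem~\ref{thm:main}.
\item Applying the relation to the antisymmetric part gives $T(M-M^t)T^t=M-M^t$. For $2\times2$ antisymmetric matrices this says $\det T=1$, so $T\in SL_2(\mathbb{Z})$.
\item Conversely, because $M$ is half the sum of its symmetric and antisymmetric parts, for $T\in SL_2(\mathbb{Z})$ the condition $TMT^t=M'$ is equivalent to $T$ carrying the quadratic form $Q(x,y)=a_{11}x^2+sxy$ to $Q'(x,y)=(a_{11}-\ell)x^2+sxy$.
\end{itemize}
So the problem reduces to a proper-equivalence question for forms of square discriminant $s^2$ whose $y^2$-coefficient vanishes.

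For that question, I will use that $Q=x(a_{11}x+sy)$ has exactly two isotropic primitive directions (up to sign): $(0,1)$ and the primitive vector $w$ on $a_{11}x+sy=0$. If $a_{11}=0$ the form is $sxy$ and the two directions are the coordinate axes. The matrix $T^t$ must map the isotropic directions of $Q'$ to those of $Q$, either preserving or swapping the two lines.
\begin{itemize}
\item \emph{Preserving case.} Here $T^t$ fixes $(0,1)$ up to sign, so $T=\pm\begin{pmatrix}1&k\\0&1\end{pmatrix}$. The computation above then shows that the $x^2$-coefficient changes by exactly $ks$, which gives $s\mid\ell$.
\item \emph{Swapping case.} Here the second basis vector goes to $\pm w$. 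Completing $w$ to an oriented basis $(v,w)$ and computing $Q(v)$ and $2B(v,w)$ shows that the new $x^2$-coefficient is congruent to $a_{11}^{-1}$ (up to sign) modulo $s$ when $\gcd(a_{11},s)=1$.
\end{itemize}

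I expect the swapping case to be the main obstacle. The danger is precisely the phenomenon behind the subgroup $\mathcal{S}^+_{s^2}$: a swap could a priori realize $a_{11}\mapsto a_{11}'$ with $a_{11}'\equiv a_{11}^{-1}\not\equiv a_{11}\pmod s$. First, I would track the sign of the middle coefficient: $2B(v,w)$ is determined by the orientation condition $\det(v,w)=1$. I would check whether this forces the middle coefficient to become $-s$, so that the swap lands in the class of $a'x^2-sxy$ rather than $a'x^2+sxy$, which would rule the case out. If the sign does not obstruct, the fallback is to show that composing with the allowed shears collapses the swap back to the preserving case modulo $s$. If that also fails, I would isolate exactly which residues $a_{11}-\ell$ are reachable, since this is the finer statement that Theorem~\ref{thm:sequiv-char} addresses; the final condition $s\mid\ell$ must then be checked against that list.
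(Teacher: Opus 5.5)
Everything up to your case split is sound. The shear gives sufficiency, $\det M=\det M'$ forces $a_{22}=0$, invariance of $M-M^t$ forces $\det T=1$, $TMT^t=M'$ becomes $Q'=Q\circ T^t$ with $Q=x(a_{11}x+sy)$, and the preserving case yields $s\mid\ell$. The gap is that you never settle the swapping case, which is all that remains of necessity. You only list what you would try, and your two fallbacks could not succeed. By your own computation a swap produces an $x^2$-coefficient $\equiv a_{11}^{-1}\pmod s$. Shears never change residues mod $s$, and a swap composed with preserving maps is still a swap, so nothing ``collapses'' back to the preserving case. The lemma holds only because swaps are impossible, and that is exactly what has to be shown. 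Likewise, Theorem~\ref{thm:sequiv-char} describes $S$-equivalence, a coarser relation whose criterion allows $h\neq 1$ and which assumes $\gcd(a_{11},s)=1$ (the lemma does not). It therefore cannot deliver $s\mid\ell$, and the $\mathcal{S}^+_{s^2}$ phenomenon is a red herring for $\Lambda_1$-equivalence.

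Your first idea, the sign check, is the right one; it takes one line and needs no coprimality. Put $L_1=x$, $L_2=a_{11}x+sy$, $u=T^te_1$, $u'=T^te_2$. The $xy$-coefficient of $Q\circ T^t$ is
\[
2B(u,u')=L_1(u)L_2(u')+L_1(u')L_2(u),\qquad\text{while}\qquad L_1(u)L_2(u')-L_1(u')L_2(u)=s\det(u,u')=s\det T=s.
\]
Since $Q(u')=(M')_{22}=0$, the vector $u'$ lies on $L_1=0$ or on $L_2=0$. In the first case $2B(u,u')=s$. In the second case (your swap) $2B(u,u')=-s$, contradicting the required value $(M')_{12}+(M')_{21}=s\neq 0$.

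The paper does exactly this in coordinates. Its Case~1 ($b_{21}=0$) is your preserving case. In its Case~2, equations $(2')$ and $(4')$ say both rows of $T$ are orthogonal to $(a_{11},s)\neq 0$, i.e.\ both columns of $T^t$ lie on $L_2=0$, forcing $\det T=0$. Your isotropic-line framing is the more conceptual version: it shows that the feared move $a_{11}\mapsto a_{11}^{-1}$ with middle coefficient $+s$ is realized only when $\det T=-1$, which you had already excluded. Without this sign computation, however, your necessity argument is incomplete.
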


\begin{proof}
	We first show that the conditions $a_{22} = 0$ and $(a_{12} + a_{21}) \mid \ell$ are necessary for $M$ and $M'$ to be related by a $\Lambda_1$-operation (which implies $S$-equivalence), and then verify they are sufficient.
	
	\textbf{Necessity.} Suppose there exists
	\[
	T = \begin{pmatrix} b_{11} & b_{12} \\ b_{21} & b_{22} \end{pmatrix} \in GL_2(\mathbb{Z})
	\]
	such that $TMT^t = M'$. Then
	\[
	\det(T)\det(M)\det(T^t) = \det(M'),
	\]
	so $\det(M) = \det(M')$, giving
	\[
	a_{11}a_{22} - a_{12}a_{21} = (a_{11} - \ell)a_{22} - a_{12}a_{21}.
	\]
	Since $\ell \neq 0$, it follows that $a_{22} = 0$.
	
	Second, from $TMT^t = M'$, we obtain
	\[
	\begin{pmatrix} b_{11} & b_{12} \\ b_{21} & b_{22} \end{pmatrix}
	\begin{pmatrix} a_{11} & a_{12} \\ a_{21} & 0 \end{pmatrix}
	\begin{pmatrix} b_{11} & b_{21} \\ b_{12} & b_{22} \end{pmatrix}
	= \begin{pmatrix} a_{11} - \ell & a_{12} \\ a_{21} & 0 \end{pmatrix}.
	\]
	Expanding yields the system of equations:
	\begin{align}
		(b_{11}^2 - 1)a_{11} + b_{11}b_{12}(a_{21} + a_{12}) &= -\ell \tag{1}\\
		b_{11}b_{21}a_{11} + b_{12}b_{21}a_{21} + (b_{11}b_{22} - 1)a_{12} &= 0 \tag{2}\\
		b_{12}b_{21} - b_{11}b_{22} + 1 &= 0 \tag{3}\\
		b_{21}^2 a_{11} + b_{21}b_{22}(a_{21} + a_{12}) &= 0 \tag{4}
	\end{align}
	Here equation (3) follows from $b_{11}b_{21}a_{11}+b_{12}b_{21}a_{21} + b_{11}b_{22}a_{12}  = a_{12} $,  $b_{21}b_{11}a_{11}+b_{11}b_{22}a_{21} + b_{12}b_{21}a_{12} = a_{21}$,  and $|a_{12} - a_{21}| = 1$ ( unless $K$ is unknot by the property of Seifert matrices \cite{Trotter1962}). We can also get $(a_{12}+a_{21})^2=(a_{12} - a_{21})^2-4a_{12}a_{21}=1-4a_{12}a_{21}\ne 0$
	
	Let $s = a_{21} + a_{12}$. From (3), $b_{11}b_{22} = b_{12}b_{21} + 1$, so the system becomes:
	\begin{align}
		(b_{11}^2 - 1)a_{11} + b_{11}b_{12} s &= -\ell \tag{1'}\\
		b_{21}(b_{11}a_{11} + b_{12} s) &= 0 \tag{2'}\\
		b_{12}b_{21} - b_{11}b_{22} + 1 &= 0 \tag{3'}\\
		b_{21}^2 a_{11} + b_{21}b_{22} s &= 0 \tag{4'}
	\end{align}
	
	Since $T \in GL_2(\mathbb{Z})$ exists by assumption, this system must have a solution. There are two cases:
	
	\textit{Case 1: $b_{21} = 0$.} Then $b_{11}b_{22} = 1$, and since $b_{ij} \in \mathbb{Z}$, $b_{11}^2 = 1$. The system reduces to $b_{11}b_{12} s = -\ell$, so $s = a_{12} + a_{21}$ divides $\ell$.
	
	\textit{Case 2: $b_{21} \neq 0$.} Then $b_{21}a_{11} + b_{22} s = 0$ and $b_{11}a_{11} + b_{12} s = 0$. Since $s\ne 0, b_{21}\ne0 $. If $b_{11}\ne0$, then we get
	\[
	\frac{b_{12}}{b_{11}} =  \frac{b_{22}}{b_{21}}= \frac{a_{11}}{-s}.
	\]
	 then  $b_{12}b_{21} - b_{11}b_{22} = 0$, contradicting $T\in \mathrm{GL}_2(\mathbb{Z})$. We can also get the contradiction if $b_{11}=0$.
	
	\textbf{Sufficiency.} Let $a_{22} = 0$ and $s = a_{12} + a_{21}$ divide $\ell$. Setting $b_{21} = 0$, $b_{11} = 1$, $b_{22} = 1$, $b_{12} = -\ell/s$, we obtain
	\[
	T = \begin{pmatrix} 1 & -\ell/s \\ 0 & 1 \end{pmatrix} \in GL_2(\mathbb{Z}),
	\]
	and one verifies that $TMT^t = M'$.
\end{proof}

$M$ and $M'$ are related by a $\Lambda_1$-operation---conjugation by a unimodular matrix. While the $\Lambda_1$-relation is strictly stronger than $S$-equivalence in general, it is natural here because the twist operation on a single band corresponds to a change of homology basis. 

For the $S$-equivalence of $
M = \begin{pmatrix} a_{11} & a_{12} \\ a_{21} & a_{22} \end{pmatrix}
$
and 
$
M' = \begin{pmatrix} a_{11} - \ell & a_{12} \\ a_{21} & a_{22} \end{pmatrix},  \ell \neq 0
$, we obtain the following.

For convenience, we write $M$ and $M'$ are $S$-equivalent as $M\sim_{S}M'$.
\begin{lemma}\label{lem:a22=0}
If $M\sim_{S}M'$, then $a_{22}=0$.
\end{lemma}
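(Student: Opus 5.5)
The plan is to use an $S$-equivalence invariant that is sensitive to the $(1,1)$-entry exactly through the product $a_{11}a_{22}$. The natural choice is the Alexander polynomial $\Delta_M(t)=\det(M-tM^t)$. It is invariant under $S$-equivalence up to multiplication by units $\pm t^k$. Murasugi's theorem \cite{Murasugi1965} gives this for knot matrices, and directly: $\Lambda_1$ multiplies it by $\det(T)^2=1$, while $\Lambda_2^{\pm1},\Lambda_3^{\pm1}$ multiply it by $\pm t^{\pm1}$, by expanding along the added row and column. Since $M$ and $M'$ are both $2\times 2$ and both satisfy $\det(M-M^t)=(a_{12}-a_{21})^2=1$ (we assume $K$ nontrivial, as in Lemma~\ref{lem:sequiv}), both polynomials are honestly normalized by $\Delta(1)=1$ and symmetric of degree $2$. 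So the unit ambiguity should collapse to equality.

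The key step is the explicit computation. For $M=\begin{pmatrix} a_{11}&a_{12}\\a_{21}&a_{22}\end{pmatrix}$ we get
\[
\det(M-tM^t)=a_{11}a_{22}(1-t)^2-(a_{12}-ta_{21})(a_{21}-ta_{12}),
\]
whose leading coefficient is $a_{11}a_{22}-a_{12}a_{21}=\det M$. The same formula for $M'$ gives leading coefficient $(a_{11}-\ell)a_{22}-a_{12}a_{21}$. The middle coefficients are likewise affine in $a_{11}a_{22}$. Comparing the two polynomials therefore yields $\ell a_{22}=0$, and hence $a_{22}=0$ because $\ell\neq0$.

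The only delicate point is ruling out that $\Delta_{M'}=\pm t^k\Delta_M$ with a nontrivial unit. Both polynomials are of the form $c\,t^2+(1-2c)t+c$ with $c=\det$ of the respective matrix. This follows from the formula together with $(a_{12}-a_{21})^2=1$.
\begin{itemize}
\item If $c\neq0$, both polynomials have degree exactly $2$ with nonzero constant term, so $k=0$. Evaluating at $t=1$, where both sides equal $1$, forces the sign $+$.
\item If $c=0$ for one of them, that polynomial is $t$ and the other must then also be a unit times $t$. Its leading coefficient then vanishes too, so both determinants are $0$.
\end{itemize}
In either case $\det M=\det M'$, which is exactly the identity used in the necessity part of Lemma~\ref{lem:sequiv}, and we conclude $a_{22}=0$.

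A cleaner alternative, which I would mention, is to use the determinant $\det(M+M^t)$ together with the leading coefficient, or simply the fact that $S$-equivalent nonsingular matrices of equal size have equal $|\det|$. The Alexander polynomial argument already handles the sign issue, so I expect no real obstacle beyond bookkeeping the unit ambiguity.
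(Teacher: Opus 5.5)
Your proof is correct and follows the paper's route: invariance of the Alexander polynomial up to $\pm t^k$ under $S$-equivalence, the explicit computation giving leading coefficient $\det M$, and the conclusion $\det M=\det M'$, hence $\ell a_{22}=0$. The one difference is that you remove the unit ambiguity more economically, using the normalized form $c\,t^{2}+(1-2c)t+c$ and $\Delta(1)=1$, whereas the paper checks $k\in\{-2,\dots,2\}$ and $\varepsilon=\pm1$ case by case; note that your first case should read ``both $c,c'\neq0$''.
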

\begin{proof}

$S$-equivalent knots have the same Alexander polynomial up to multiplication by $\pm t^{k}$ \cite{Murasugi1965}. For the Seifert matrices as
\[
M=\begin{pmatrix}a_{11}&a_{12}\\ a_{21}&a_{22}\end{pmatrix},
\qquad
M'=\begin{pmatrix}a_{11}-\ell&a_{12}\\ a_{21}&a_{22}\end{pmatrix},
\qquad \ell\neq0,
\]
a direct computation gives
\begin{align*}
\Delta_M(t)&=\det(tM-M^{t})
=\det\begin{pmatrix}a_{11}(t-1)&ta_{12}-a_{21}\\ ta_{21}-a_{12}&a_{22}(t-1)\end{pmatrix}\\
&=(a_{11}a_{22}-a_{12}a_{21})t^{2}+\bigl(a_{12}^{2}+a_{21}^{2}-2a_{11}a_{22}\bigr)t+(a_{11}a_{22}-a_{12}a_{21}),\\
\Delta_{M'}(t)&=\bigl((a_{11}-\ell)a_{22}-a_{12}a_{21}\bigr)t^{2}+\bigl(a_{12}^{2}+a_{21}^{2}-2(a_{11}-\ell)a_{22}\bigr)t\\
&\quad+\bigl((a_{11}-\ell)a_{22}-a_{12}a_{21}\bigr).
\end{align*}
Set $A=a_{11}a_{22}-a_{12}a_{21}$ and $B=a_{12}^{2}+a_{21}^{2}-2a_{11}a_{22}$; then $\Delta_M(t)=At^{2}+Bt+A$ and $\Delta_{M'}(t)=A't^{2}+B't+A'$ with $A'=A-\ell a_{22}$, $B'=B+2\ell a_{22}$. Neither polynomial vanishes identically: if $A=0$ then $a_{11}a_{22}=a_{12}a_{21}$, whence $B=(a_{12}-a_{21})^{2}=1$ by $|a_{12}-a_{21}|=1$ \cite{Trotter1962}, so $\Delta_M(t)=t$; similarly $A'=B'=0$ would force $(a_{12}-a_{21})^{2}=0$, impossible.

Because $S$-equivalent knots share the Alexander polynomial up to $\pm t^{k}$, we have $\Delta_M(t)=\varepsilon t^{k}\Delta_{M'}(t)$ for some $\varepsilon\in\{\pm1\}$ and $k\in\mathbb{Z}$. Both polynomials have degree at most $2$, so $k\in\{-2,-1,0,1,2\}$.
\begin{itemize}
\item $k=2$: then $\Delta_M=\varepsilon t^{2}\Delta_{M'}$, comparing the coefficient of $t$ and the constant term, and we get $A=B=0$ and hence $\Delta_M=0$, impossible.
\item $k=1$: comparing coefficients yields $A=A'=0$ and $B=\varepsilon A'=0$. Since $A-A'=\ell a_{22}$ and $\ell\neq0$, we get $a_{22}=0$; then $B=a_{12}^{2}+a_{21}^{2}=0$ forces $a_{12}=a_{21}=0$, contradicting $|a_{12}-a_{21}|=1$.
\item $k\le -1$: interchanging the roles of $\Delta_M$ and $\Delta_{M'}$ reduces to the case $k\ge1$, already impossible.
\end{itemize}
Thus $k=0$ and $\Delta_M(t)=\varepsilon\Delta_{M'}(t)$.
If $\varepsilon=-1$, comparing the $t^{2}$- and $t$-coefficients gives $A=-A'$ and $B=-B'$. From $A=-A'$ we obtain $2A=\ell a_{22}$; from $B=-B'$ we obtain $B=-\ell a_{22}=-2A$. Substituting $A=a_{11}a_{22}-a_{12}a_{21}$ and $B=a_{12}^{2}+a_{21}^{2}-2a_{11}a_{22}$ into $B=-2A$ yields $(a_{12}-a_{21})^{2}=0$, contradicting $|a_{12}-a_{21}|=1$.
Hence $\varepsilon=+1$ and $\Delta_M=\Delta_{M'}$. Comparing the $t^{2}$-coefficients gives $A=A'$, i.e.\ $\ell a_{22}=0$, and since $\ell\neq0$ we conclude $a_{22}=0$.

\end{proof}

Recall that for the square discriminant $D=s^{2}$ ($s$ odd), \cite[Proposition~6.8]{AkaFellerMillerWieser2023} provides the group isomorphism
\[
\phi_{s}:(\mathbb{Z}/s\mathbb{Z})^{\times}\xrightarrow{\sim}\mathcal{G}^{+}_{s^{2}},\qquad \phi_{s}(a)=\bigl[ax^{2}+sxy\bigr],
\]
under which Gauss composition corresponds to multiplication of coefficients modulo $s$. Set
\[
H_{s}:=\phi_{s}^{-1}\bigl(\mathcal{S}^{+}_{s^{2}}\bigr)\le(\mathbb{Z}/s\mathbb{Z})^{\times},
\]

\begin{theorem}\label{thm:sequiv-char}
Let $gcd(a_{11},a_{12}+a_{21})=1$. Then 
$M\sim_{S}M'$ if and only if
\[\ell\equiv a_{11}(1-h)\mod (a_{12}+a_{21}).\tag{$\dagger$}\]
for some $h\in H_s$.
\end{theorem}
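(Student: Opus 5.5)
The plan is to reduce $S$-equivalence of $M$ and $M'$ to a statement about proper classes of binary quadratic forms. We then read off the answer through the isomorphism $\phi_s$.

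\textbf{Step 1: reduce to forms of square discriminant.} By Lemma~\ref{lem:a22=0}, either side of the equivalence forces $a_{22}=0$. For the forward direction this holds because $S$-equivalence implies $a_{22}=0$. For the converse we assume the Alexander polynomials agree, which again forces $a_{22}=0$. So assume $a_{22}=0$ throughout, and write $s=a_{12}+a_{21}$. As shown in the proof of Lemma~\ref{lem:sequiv}, $s$ is odd and $s^2=1-4a_{12}a_{21}$. To each genus-one Seifert matrix $N$ attach the form $q_N(x,y)=(x,y)N(x,y)^t$. This gives
\[
q_M=a_{11}x^2+sxy,\qquad q_{M'}=(a_{11}-\ell)x^2+sxy,
\]
both of discriminant $s^2$. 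The hypothesis $\gcd(a_{11},s)=1$ makes $q_M$ primitive, and its proper class is $\phi_s(a_{11})\in\mathcal{G}^+_{s^2}$.

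\textbf{Step 2: invoke the AFMW classification.} The key input from \cite{AkaFellerMillerWieser2023} is their description of $S$-equivalence for $2\times2$ Seifert matrices with fixed $a_{12}-a_{21}$ and primitive symmetrized form of discriminant $D$. Two such matrices are $S$-equivalent if and only if the proper classes of their forms lie in the same coset of $\mathcal{S}^+_D$ in $\mathcal{G}^+_D$. I would quote this in the form used to build $H_s$.

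We must also fix the orientation convention. Both $M$ and $M'$ have the same off-diagonal entries, so $M-M^t=M'-(M')^t$. Hence no transpose or sign twist intervenes, and the comparison is between proper classes rather than improper ones.

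\textbf{Step 3: handle primitivity of $q_{M'}$.} This case must be treated separately. Suppose $d=\gcd(a_{11}-\ell,s)>1$. Then $q_{M'}$ is imprimitive, and I would argue that $M'$ cannot be $S$-equivalent to $M$. The content of the symmetrized form should be an $S$-equivalence invariant, since it is detected by the isometry class of $M+M^t$ over $\mathbb{Z}_{(p)}$ for $p\mid d$, or equivalently by the Blanchfield form. At the same time, $(\dagger)$ fails in this case. Indeed $(\dagger)$ gives $a_{11}-\ell\equiv a_{11}h \pmod s$, and $a_{11}h$ is a unit mod $s$. So both sides of the claimed equivalence are false.

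\textbf{Step 4: the arithmetic.} When $q_{M'}$ is primitive, its class is $\phi_s(a_{11}-\ell)$. By Step 2, $M\sim_S M'$ if and only if $\phi_s(a_{11}-\ell)\,\phi_s(a_{11})^{-1}\in\mathcal{S}^+_{s^2}$. Since $\phi_s$ is a group isomorphism, this says $h:=(a_{11}-\ell)a_{11}^{-1}\in H_s$. That is, $a_{11}-\ell\equiv a_{11}h \pmod s$, which rearranges to $\ell\equiv a_{11}(1-h)\pmod s$, i.e.\ $(\dagger)$.

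\textbf{Main obstacle.} I expect the real difficulty to lie in Step 2, together with the imprimitive case of Step 3. One must make sure that AFMW's theorem is stated for $S$-equivalence of arbitrary Seifert matrices of the given size and not only for matrices realized by disjoint surfaces. One must also check that their normalization of $a_{12}-a_{21}=\pm1$ matches ours. If their statement only covers primitive forms, the argument for Step 3 must be supplied separately. I would first try to do this via the invariant factors of $M+M^t$ localized at primes dividing $s$.
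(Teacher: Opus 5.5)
Your argument is the paper's: you pass to $q=a_{11}x^2+sxy$ and $q'=(a_{11}-\ell)x^2+sxy$ of discriminant $s^2$, apply the coset criterion $[q']\in\mathcal{S}^+_{s^2}*[q]$, and pull back along $\phi_s$. Your Step~3 also fills a point the paper's forward direction passes over, since the paper writes $[q']=\phi_s(a_{11}-\ell)$ without checking $\gcd(a_{11}-\ell,s)=1$; you can settle it cleanly with the $S$-equivalence invariant $\operatorname{coker}(N+N^t)\cong\mathbb{Z}/d\oplus\mathbb{Z}/(s^2/d)$, $d=\gcd(N_{11},s)$ for $N\in\{M,M'\}$ (using that $s$ is odd), which is cyclic exactly when $d=1$.

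Two small corrections. First, $(\dagger)$ does not force $a_{22}=0$: if $a_{22}\neq0$, then $h=1$ with $s\mid\ell$ satisfies $(\dagger)$, while Lemma~\ref{lem:a22=0} rules out $M\sim_S M'$. So $a_{22}=0$ must be a standing hypothesis, as the paper assumes in its converse, not something derived from either side. Second, $s^2=1+4a_{12}a_{21}$, not $1-4a_{12}a_{21}$; this sign slip also appears in the proof of Lemma~\ref{lem:sequiv}, and it is harmless because only the oddness of $s$ and $D=s^2$ are used.
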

\begin{proof}
\noindent\textit{Step 1. $''\Rightarrow''$}

Put $s=a_{12}+a_{21}$; the associated IBQFs then have square discriminant $s^{2}$. For a genus-one Seifert matrix $M=\begin{pmatrix}a_{11}&a_{12}\\ a_{21}&0\end{pmatrix}$ in a symplectic basis $\{e_{1},e_{2}\}$, the intersection form $s_M$ gives the IBQF
\[
q_M(x,y)=s_M(xe_{1}+ye_{2},xe_{1}+ye_{2})=a_{11}x^{2}+(a_{12}+a_{21})xy,
\]
the $y^{2}$-term vanishing because $a_{22}=0$. Thus
\[
q(x,y)=a_{11}x^{2}+sxy,\qquad q'(x,y)=(a_{11}-\ell)x^{2}+sxy,
\]
both of discriminant $D=s^{2}-4a_{11}\cdot0=s^{2}$.

By \cite[Theorem 14.1]{Bandes74} and \cite[Proposition 2.9]{Miller22}, two IBQFs in $\mathcal{Q}_{D}^{+}$ are $S$-equivalent if and only if they differ by the action of an element of $\mathcal{S}^{+}_{D}\subseteq\mathcal{G}^{+}_{D}$ via Gauss composition. Since $\mathcal{G}^{+}_{D}$ is abelian under this law (denoted~$*$), this means
\[
M\sim_{S}M'\quad\Longleftrightarrow\quad [q']=\alpha*[q]\ \text{for some }\alpha\in\mathcal{S}^{+}_{s^{2}}. \tag{*}
\]

We now \emph{add} the hypothesis $\gcd(a_{11},s)=1$, so that $q$ is primitive and lies in the domain of the isomorphism below. And $h$ is a element of $H_{s}$ precisely when $\phi_{s}(h)\in\mathcal{S}^{+}_{s^{2}}$. Since $[q]=\phi_{s}(a_{11})$ and $[q']=\phi_{s}(a_{11}-\ell)$, applying $\phi_{s}$ to~$(*)$ gives $\phi_{s}(a_{11}-\ell)=\alpha*\phi_{s}(a_{11})$ for some $\alpha\in\mathcal{S}^{+}_{s^{2}}$. Equivalently, with $h:=\phi_{s}^{-1}(\alpha)\in H_{s}$,
\[
a_{11}-\ell\equiv ha_{11}\pmod s,
\]
i.e.
\[
\ell\equiv a_{11}(1-h)\pmod s.
\]
If $M\sim_{S}M'$, then by~$(*)$ there exists $\alpha\in\mathcal{S}^{+}_{s^{2}}$ with $[q']=\alpha*[q]$; hence $\phi_{s}(a_{11}-\ell)=\alpha*\phi_{s}(a_{11})$, and putting $h:=\phi_{s}^{-1}(\alpha)\in H_{s}$ yields~$(\dagger)$.

\noindent\textit{Step 2. $''\Leftarrow''$}

Conversely, assume $a_{22}=0,\gcd(a_{11},s)=1,\ell\equiv a_{11}(1-h)\pmod s$ for some $h\in H_{s}$. Then $\alpha:=\phi_{s}(h)\in\mathcal{S}^{+}_{s^{2}}$ by definition of $H_{s}$, and $a_{11}-\ell\equiv ha_{11}\pmod s$ implies $\phi_{s}(a_{11}-\ell)=\phi_{s}(ha_{11})=\phi_{s}(h)*\phi_{s}(a_{11})=\alpha*[q]$. Thus $[q']=\alpha*[q]$ with $\alpha\in\mathcal{S}^{+}_{s^{2}}$, so by \cite[Theorem~5.8]{AkaFellerMillerWieser2023} the matrices $M$ and $M'$ are $S$-equivalent. 
(Primitivity of $q'$ is automatic: $a_{11}-\ell\equiv ha_{11}\pmod s$ with $\gcd(a_{11},s)=\gcd(h,s)=1$ gives $\gcd(a_{11}-\ell,s)=1$.)

\end{proof}

The $\Lambda_{1}$-equivalence criterion of Lemma~\ref{lem:sequiv} requires $a_{22}=0$ and $s\mid\ell$, which is precisely the special case $h=1$ of~$(\dagger)$. By \cite[Proposition~6.8,Lemma~6.9]{AkaFellerMillerWieser2023}, $\mathcal{S}^{+}_{s^{2}}$ is trivial exactly for $s=1$ and $s=3$, and is non-trivial for every odd $s>3$ (it always contains $\phi_{s}(4)\neq\mathrm{id}$). Thus we obtain that 
\begin{corollary}\label{cor:S-lambda1}
If $s=1$ or $3$, then $M$ and $M'$ are $S$-equivalent $\iff$ $M$ and $M'$ are $\Lambda_1$-equivalent.
\end{corollary}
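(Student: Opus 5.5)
The plan is to compare the two criteria that are already available: Lemma~\ref{lem:sequiv} for $\Lambda_1$-equivalence and Theorem~\ref{thm:sequiv-char} for $S$-equivalence. The hypothesis $s\in\{1,3\}$ makes the subgroup $H_s$ trivial, and then the two criteria coincide. One direction needs no work: a $\Lambda_1$-operation is one of the moves generating $S$-equivalence, so $\Lambda_1$-equivalence always implies $M\sim_S M'$. The real content is the converse, so I assume $M\sim_S M'$ and aim to verify the two conditions of Lemma~\ref{lem:sequiv}, namely $a_{22}=0$ and $s\mid\ell$.

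First, Lemma~\ref{lem:a22=0} gives $a_{22}=0$ directly from $M\sim_S M'$. This puts $M$ in the form $\begin{pmatrix}a_{11}&a_{12}\\a_{21}&0\end{pmatrix}$, which is exactly the setting of Theorem~\ref{thm:sequiv-char}.

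Next, I use the cited fact from \cite[Proposition~6.8, Lemma~6.9]{AkaFellerMillerWieser2023} that $\mathcal{S}^+_{s^2}$ is trivial when $s=1$ or $s=3$. Since $\phi_s$ is an isomorphism, this gives $H_s=\{1\}$. In the case $s=1$ the conclusion is immediate: $s\mid\ell$ holds for every $\ell$, so Lemma~\ref{lem:sequiv} applies and no further argument is needed. In the case $s=3$ we have $\gcd(a_{11},3)\in\{1,3\}$.
\begin{itemize}
\item If $\gcd(a_{11},3)=1$, Theorem~\ref{thm:sequiv-char} applies. With $H_s=\{1\}$, condition $(\dagger)$ reads $\ell\equiv a_{11}(1-1)=0\pmod 3$, so $3\mid\ell$, and Lemma~\ref{lem:sequiv} gives $\Lambda_1$-equivalence.
\item If $3\mid a_{11}$, the quadratic form $q=a_{11}x^2+3xy$ is imprimitive, so the isomorphism $\phi_s$ does not apply. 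I will argue directly instead. By $(*)$, or simply because the content of the form is an $S$-equivalence invariant, the form $q'$ must also have content $3$. Its content is $\gcd(a_{11}-\ell,3)$, so $3\mid a_{11}-\ell$, which forces $3\mid\ell$.
\end{itemize}

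The main obstacle is this imprimitive subcase, which lies outside the stated hypothesis of Theorem~\ref{thm:sequiv-char}. I plan to handle it through invariance of the content (the gcd of the coefficients) of the binary quadratic form $x^tMx$ under $S$-equivalence. The reason this should hold is that $S$-equivalence is generated by $\Lambda_1$, which preserves the form up to unimodular change, together with enlargements $\Lambda_{2,3}$. An enlargement adds a hyperbolic summand, and that summand is not isotropic-trivial in a way that could change the content. If that invariance becomes delicate to justify, a fallback is to restrict the corollary to $\gcd(a_{11},s)=1$, matching the standing hypothesis of Theorem~\ref{thm:sequiv-char}. For $s=1$ no such restriction is needed in any case.
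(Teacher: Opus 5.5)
Your treatment of $s=1$, and of $s=3$ with $3\nmid a_{11}$, matches the paper's argument. Since $\mathcal{S}^+_{s^2}$ is trivial, $h=1$ in $(\dagger)$, which is exactly the criterion $s\mid\ell$ of Lemma~\ref{lem:sequiv}. You are also right that this leaves out $s=3$ with $3\mid a_{11}$, because Theorem~\ref{thm:sequiv-char} assumes $\gcd(a_{11},s)=1$. The paper passes over this case silently, even though its own $9_{46}$ example ($a_{11}=0$) lies there.

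The gap is in how you close that case. The content of $x^tMx$ is \emph{not} preserved by the moves generating $S$-equivalence. A $\Lambda_2$ or $\Lambda_3$ enlargement adds basis vectors $y,z$ whose monomial $yz$ has coefficient $\pm1$, so every enlarged form has content $1$. A chain from $M$ to $M'$ may pass through such larger matrices, so your sentence about the hyperbolic summand is false, and tracking the content move by move proves nothing. Appealing to $(*)$ does not help either: the paper only uses the Gauss-composition description through $\phi_s$, i.e.\ on primitive forms, and says nothing about content. The fallback of assuming $\gcd(a_{11},s)=1$ would prove a strictly weaker statement than the corollary.

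To close the gap, you need an invariant of the whole $S$-equivalence class that detects $3\mid a_{11}$ on $2\times2$ representatives. The Alexander module $A_M=\operatorname{coker}(tM-M^t)$ over $R=\mathbb{Z}[t^{\pm1}]$ works. Its isomorphism type is unchanged by $\Lambda_1$, and under $\Lambda_{2,3}$ the enlarged presentation matrix splits off an invertible $2\times2$ block. The Alexander polynomial used in Lemma~\ref{lem:a22=0} is its zeroth Fitting ideal; use the first Fitting ideal instead, which is generated by the entries of $tM-M^t$.

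With $a_{22}=0$ and $\{a_{12},a_{21}\}=\{1,2\}$ (forced by $s=3$ and $|a_{12}-a_{21}|=1$), this ideal is $(a_{11}(t-1),\,2t-1,\,t-2)=(3,\,t-2,\,a_{11})$. This uses $2t-1=2(t-2)+3$ and $t-1=(t-2)+1$. The ideal is all of $R$ when $3\nmid a_{11}$, and the proper ideal $(3,t-2)$ when $3\mid a_{11}$, since $R/(3,t-2)\cong\mathbb{F}_3$. Hence $M\sim_S M'$ with $3\mid a_{11}$ forces $3\mid a_{11}-\ell$, so $3\mid\ell$, and Lemma~\ref{lem:sequiv} gives $\Lambda_1$-equivalence. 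With this replacement your case split is a complete proof, and it covers a case the paper's derivation does not justify.
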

Recall that by \cite[Lemma~6.9]{AkaFellerMillerWieser2023}, when $s\geq 3$, the special classes $[ax^{2}+xy+cy^{2}]$ with $1-4ac=s^{2}$ are exactly the elements $\phi_{s}(2\delta\beta\bmod s)$ where $\delta\mid k+1$, $\beta\mid k$, and $s=2k+1,k\geq 1$. Since $\mathcal{S}^{+}_{s^{2}}$ is generated by the squares of these special classes (by the definition of $\mathcal{S}^{+}_{D}$, see \cite{AkaFellerMillerWieser2023}), the corresponding subgroup $H_{s}=\phi_{s}^{-1}(\mathcal{S}^{+}_{s^{2}})\le(\mathbb{Z}/s\mathbb{Z})^{\times}$ is generated by their squares:
\[
 (2\delta\beta)^{2}\bmod s. \tag{$\ddagger$}
\]
where $\delta\mid k+1,\ \beta\mid k$.

\begin{example}
For the smallest non-trivial case $s=5$ we have $k=2$, so $\delta\in\{1,3\}$ and $\beta\in\{1,2\}$; then $2\delta\beta\in\{2,6,4,12\}\equiv\{2,1,4,2\}\pmod5$, and therefore
\[
H_{5}=\langle 2^{2},1^{2},4^{2}\rangle=\{1,4\}\subset(\mathbb{Z}/5\mathbb{Z})^{\times},
\]
a subgroup of order $2$ (the squares), confirming that $S$-equivalence is strictly broader than $\Lambda_{1}$-equivalence here. 
\end{example}

\begin{remark}
The same statement holds for the pair $\begin{pmatrix} a_{11} & a_{12} \\ a_{21} & a_{22} \end{pmatrix}$ and $\begin{pmatrix} a_{11} & a_{12} \\ a_{21} & a_{22}-\ell \end{pmatrix}$, $\ell\neq0$, with the condition $a_{22}=0$ replaced by $a_{11}=0$.
\end{remark}

\begin{example}
Consider a knot $K$ with Seifert matrix $M = \begin{pmatrix} 0 & 1 \\ 2 & 0 \end{pmatrix}$ and the knot $K(\ell,0)$ obtained from the twist operation. The Seifert matrix of $K(\ell,0)$ is
\[
M' = \begin{pmatrix} -\ell & 1 \\ 2 & 0 \end{pmatrix}.
\]
Since $a_{22}=0$ and $s=a_{12}+a_{21}=1+2=3$ divides $\ell=3k$, Lemma~\ref{lem:sequiv} gives a $\Lambda_1$-operation, so $K$ and $K(\ell,0)$ are $\Lambda_1$- and hence $S$-equivalent. Indeed,
\[
\begin{pmatrix} 1 & -k \\ 0 & 1 \end{pmatrix} M \begin{pmatrix} 1 & 0 \\ -k & 1 \end{pmatrix} = M'.
\]
\end{example}

\begin{example}\label{ex:s5}
The strict inclusion of $\Lambda_1$-equivalence in $S$-equivalence for $s>3$ is witnessed explicitly at $s=5$. Take $a_{11}=1$, $a_{12}=2$, $a_{21}=3$, so that $s=a_{12}+a_{21}=5$ (odd, since $|a_{12}-a_{21}|=1$) and $\gcd(a_{11},s)=\gcd(1,5)=1$. Under the hypothesis $\gcd(a_{11},s)=1$, Theorem~\ref{thm:sequiv-char} reduces the $S$-equivalence condition to the congruence~$(\dagger)$:
\[
\ell\equiv a_{11}(1-h)\pmod s,\qquad h\in H_s.
\]
For $s=5$ one has $k=2$, so $\delta\in\{1,3\}$ and $\beta\in\{1,2\}$; the computation in the proof of Theorem~\ref{thm:sequiv-char} then gives $H_5=\{1,4\}$, the subgroup of squares in $(\mathbb Z/5\mathbb Z)^\times$. The two branches are:
\begin{itemize}
\item $h=1$: $\ell\equiv 1(1-1)\equiv 0\pmod 5$, i.e.\ $5\mid\ell$; this is exactly the $\Lambda_1$-equivalence case of Lemma~\ref{lem:sequiv}.
\item $h=4$: $\ell\equiv 1(1-4)\equiv -3\equiv 2\pmod 5$.
\end{itemize}
Choosing $\ell=2$ (so $5\nmid\ell$), the matrices
\[
M=\begin{pmatrix}1&2\\3&0\end{pmatrix},
\qquad
M'=\begin{pmatrix}-1&2\\3&0\end{pmatrix}
\]
satisfy~$(\dagger)$ with $h=4$, hence are $S$-equivalent by Theorem~\ref{thm:sequiv-char}; but $s=5\nmid 2=\ell$, so they are \emph{not} $\Lambda_1$-equivalent by Lemma~\ref{lem:sequiv}. Thus this is a concrete pair of genus-one Seifert matrices that are $S$-equivalent but not $\Lambda_1$-equivalent, confirming that for $s=5$ the two equivalence relations do not coincide.
\end{example}

\begin{lemma}\label{lem:jones}
	If the Jones polynomial $V(K)$ of $K$ is not equal to $1$, then $K(\ell,0)$ is not equivalent to $K$.
\end{lemma}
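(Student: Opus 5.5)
We argue through the skein theory of the Jones polynomial, treating the $2\ell$ added half-twists as a twist region. In a diagram of $K(\ell,0)$ the added twists on the first band appear as $2\ell$ crossings between two oppositely oriented strands (the two edges of the band). The skein relation
\[
t^{-1}V(L_+)-tV(L_-)=\bigl(t^{1/2}-t^{-1/2}\bigr)V(L_0)
\]
applied at one of these crossings relates $K(\ell,0)$ to $K(\ell-1,0)$; here $L_0$ is the smoothing, which cuts the first band. We would first check the orientation conventions: because the two strands of a band run antiparallel, the smoothing at an added twist crossing yields a fixed link $L_0$ independent of $\ell$. Cutting the band gives a link whose Seifert surface is the disk with only the second band attached, so $L_0$ is a two-component link with unknotted-type pieces determined only by the second band and the crossings indicated by dotted lines in Figure~\ref{fig:seifert-surface}.

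This yields a linear recurrence $V(K(\ell,0))=t^{2}V(K(\ell-1,0))+t\,(t^{1/2}-t^{-1/2})V(L_0)$, up to the correct placement of $t$ versus $t^{-1}$ according to the sign of the crossing. Iterating gives a closed form
\[
V(K(\ell,0))=t^{2\ell}V(K)+\frac{1-t^{2\ell}}{1-t^{2}}\,c(t),\qquad c(t)=t(t^{1/2}-t^{-1/2})V(L_0),
\]
valid for all integers $\ell$, including negative ones, by running the recurrence backwards. Then $V(K(\ell,0))-V(K)=(t^{2\ell}-1)\bigl(V(K)-c(t)/(1-t^{2})\bigr)$. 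Since $\ell\neq0$ forces $t^{2\ell}\ne1$, the two knots have equal Jones polynomials if and only if $V(K)(1-t^2)=c(t)$.

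The main obstacle is ruling out $V(K)(1-t^{2})=c(t)$ using only the hypothesis $V(K)\neq1$. To handle it, we would observe that the same identity evaluated for the ``infinite twist'' limit, or equivalently the fixed point of the recurrence, is the Jones polynomial that an unknot would have if we replaced the band by a suitable trivial band. Concretely, we would try to identify $c(t)/(1-t^{2})$ with $V$ of the knot in which the first band has been replaced by one whose core is unknotted and unlinked from the second band; that knot is the unknot, so $c(t)/(1-t^{2})=1$. The factor $t^{1/2}-t^{-1/2}$ times $V(L_0)$ should then match the standard formula $V(\text{unlink})$-type computation. If this identification works, then $V(K(\ell,0))=V(K)$ forces $V(K)=1$, contradicting the hypothesis. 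Hence $V(K(\ell,0))\neq V(K)$, and the knots are inequivalent because $V$ is a knot invariant.

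If the identification with the unknot fails in general, the fallback is to compare the spans or the lowest-degree terms of $t^{2\ell}V(K)$ and the geometric-series term directly, using $V(K)\ne1$ to show that some extreme coefficient cannot cancel. This is the step where we expect the most care to be needed.
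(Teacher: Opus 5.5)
Your set-up is the same as the paper's. Smoothing a crossing of the added twist region cuts the first band, which gives $V(K(\ell,0))=t^{2}V(K(\ell-1,0))+c(t)$ and reduces $V(K(\ell,0))=V(K)$ to $V(K)=c(t)/(1-t^{2})$.

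\textbf{The gap.} It is exactly the step you flag. Simplifying,
$c(t)/(1-t^{2})=-V(L_0)/(t^{1/2}+t^{-1/2})$. This depends only on $L_0$, and it equals $1$ if and only if $V(L_0)=-(t^{1/2}+t^{-1/2})$, the Jones polynomial of the two-component unlink.
\begin{itemize}
\item Your proposed identification does not prove this. Replacing the first band by a trivial band does give the unknot $U_0$. But $U_0$ is only the $\ell=0$ member of a \emph{different} twist family with the \emph{same} $L_0$. That family satisfies $V(U_\ell)=t^{2\ell}(1-V^{*})+V^{*}$ with $V^{*}=c(t)/(1-t^{2})$, and at $\ell=0$ this is the tautology $1=1$, which says nothing about $V^{*}$.
\item The fallback is also empty. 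If $V(K)$ equals the fixed point, all $V(K(\ell,0))$ coincide, so no comparison of spans or extreme coefficients can separate them.
\end{itemize}

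\textbf{What is missing.} You need an actual identification of $L_0$. The two halves of the cut band retract into the disk, so $L_0$ is the boundary of the annulus formed by the disk and the second band. That is, it consists of two parallel copies of the core $\alpha_2$, with linking governed by $a_{22}$. These components may be knotted, so your phrase ``unknotted-type pieces'' is unjustified.

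When the second band is unknotted and untwisted, $V(L_0)=-(t^{1/2}+t^{-1/2})$. This is the situation in the paper's skein figure, which draws $D_0$ as the two-component unlink, and in $\lambda(0,0,3)$. Then $c(t)=1-t^{2}$, and the recurrence becomes the paper's $V(K(\ell,0))=t^{2}V(K(\ell-1,0))+1-t^{2}$, with fixed point $1$. That evaluation is precisely what the paper's proof plugs in.

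Outside that situation your identification is false:
\begin{itemize}
\item For an unknotted second band with $a_{22}=\pm1$, $L_0$ is a Hopf link. Then $-V(L_0)/(t^{1/2}+t^{-1/2})$ is not even a Laurent polynomial, since $V(L_0)$ does not vanish at $t=-1$. The conclusion still holds in this case, but for a different reason.
\item For $a_{22}=0$ and $\alpha_2$ a trefoil, $L_0$ is the $0$-framed $2$-cable of the trefoil, whose Jones polynomial differs from the unlink's. So the fixed point is a Laurent polynomial other than $1$, and $V(K)\neq 1$ is not the condition your recurrence needs.
\end{itemize}

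To close the argument, either assume (as the paper tacitly does) that $D_0$ is the two-component unlink and insert its Jones polynomial, or replace the hypothesis $V(K)\neq 1$ by $V(K)\neq -V(L_0)/(t^{1/2}+t^{-1/2})$.
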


\begin{proof}
Let $\ell > 0$. By the skein relation for the Jones polynomial (see Figure~\ref{fig:skein})

	\begin{figure}[H]
		\centering
		\tikzset{every picture/.style={line width=0.75pt}} 
		
		\begin{tikzpicture}[x=0.75pt,y=0.75pt,yscale=-0.75,xscale=0.75]
			\draw    (39.89,863.54) .. controls (37.26,938.89) and (196.14,937.48) .. (194.83,858.3) ;
			\draw    (39.89,863.54) .. controls (41.27,846.54) and (60.9,864.34) .. (61.68,849.43) ;
			\draw    (67.72,850.44) .. controls (66.67,861.32) and (95.03,863.14) .. (96.09,850.24) ;
			\draw    (101.86,852.26) .. controls (96.09,863.14) and (132.85,862.93) .. (131.27,851.85) ;
			\draw    (136.79,850.64) .. controls (133.9,863.74) and (166.73,861.52) .. (166.73,851.05) ;
			\draw    (173.82,851.05) .. controls (173.03,864.34) and (193.51,850.04) .. (194.83,858.3) ;
			\draw  [dash pattern={on 4.5pt off 4.5pt}]  (68.82,849.03) .. controls (75.73,841.56) and (79.47,842.11) .. (88.1,836.86) ;
			\draw  [dash pattern={on 4.5pt off 4.5pt}]  (61.68,849.43) .. controls (65.19,840.66) and (64.29,835.62) .. (80.69,832.71) ;
			\draw  [dash pattern={on 4.5pt off 4.5pt}]  (96.09,850.24) .. controls (108.95,826.06) and (153.6,806.32) .. (173.82,851.05) ;
			\draw  [dash pattern={on 4.5pt off 4.5pt}]  (101.86,852.26) .. controls (114.21,826.06) and (157.27,823.64) .. (166.73,851.05) ;
			\draw   (56.59,860.28) -- (49.66,856.05) -- (56.69,852.3) ;
			\draw  [dash pattern={on 4.5pt off 4.5pt}]  (103.13,832.44) .. controls (117.8,837.97) and (129.55,846.25) .. (131.27,851.85) ;
			\draw  [dash pattern={on 4.5pt off 4.5pt}]  (108.02,826.63) .. controls (120.54,825.8) and (134.24,845.02) .. (136.79,850.64) ;
			\draw    (88.1,836.86) .. controls (87.88,836.59) and (97.38,821.93) .. (99.97,831.88) ;
			\draw    (80.69,832.71) .. controls (81.8,830.3) and (90.47,829.67) .. (90.47,831.06) ;
			\draw    (93.06,833.54) .. controls (97.95,841.29) and (104.28,822.76) .. (106.29,826.91) ;
			\draw     ;
			\draw   (322.17,898) -- (326.47,891.29) -- (329.43,898.69) ;
			\draw    (261.89,783.87) .. controls (259.26,859.22) and (418.14,857.81) .. (416.83,778.63) ;
			\draw    (261.89,783.87) .. controls (263.27,766.87) and (282.9,784.68) .. (283.68,769.77) ;
			\draw    (289.72,770.78) .. controls (288.67,781.66) and (317.03,783.47) .. (318.09,770.57) ;
			\draw    (323.86,772.59) .. controls (318.09,783.47) and (354.85,783.27) .. (353.27,772.19) ;
			\draw    (358.79,770.98) .. controls (355.9,784.07) and (388.73,781.86) .. (388.73,771.38) ;
			\draw    (395.82,771.38) .. controls (395.03,784.68) and (415.51,770.37) .. (416.83,778.63) ;
			\draw  [dash pattern={on 4.5pt off 4.5pt}]  (290.82,769.36) .. controls (297.73,761.89) and (303.57,755.81) .. (308.17,756.67) ;
			\draw  [dash pattern={on 4.5pt off 4.5pt}]  (283.68,769.77) .. controls (287.19,760.99) and (286.29,755.95) .. (302.69,753.05) ;
			\draw  [dash pattern={on 4.5pt off 4.5pt}]  (318.09,770.57) .. controls (330.95,746.4) and (375.6,726.65) .. (395.82,771.38) ;
			\draw  [dash pattern={on 4.5pt off 4.5pt}]  (323.86,772.59) .. controls (336.21,746.4) and (379.27,743.98) .. (388.73,771.38) ;
			\draw   (276.7,778.33) -- (268.8,776.48) -- (274.3,770.72) ;
			\draw  [dash pattern={on 4.5pt off 4.5pt}]  (325.13,752.77) .. controls (339.8,758.3) and (351.55,766.59) .. (353.27,772.19) ;
			\draw  [dash pattern={on 4.5pt off 4.5pt}]  (330.02,746.96) .. controls (342.54,746.14) and (356.24,765.35) .. (358.79,770.98) ;
			\draw    (307.5,751.67) .. controls (311.17,746.67) and (317.17,744) .. (321.5,750) ;
			\draw    (302.69,753.05) .. controls (327.5,760.67) and (326.28,743.09) .. (328.29,747.24) ;
			\draw     ;
			\draw    (274.22,929.24) .. controls (271.59,1004.59) and (430.47,1003.18) .. (429.16,924) ;
			\draw    (274.22,929.24) .. controls (275.61,912.24) and (295.23,930.05) .. (296.02,915.14) ;
			\draw    (302.06,916.14) .. controls (301.01,927.02) and (329.37,928.84) .. (330.42,915.94) ;
			\draw    (336.2,917.96) .. controls (330.42,928.84) and (367.18,928.64) .. (365.61,917.55) ;
			\draw    (371.12,916.35) .. controls (368.23,929.44) and (401.06,927.23) .. (401.06,916.75) ;
			\draw    (408.15,916.75) .. controls (407.36,930.05) and (427.85,915.74) .. (429.16,924) ;
			\draw  [dash pattern={on 4.5pt off 4.5pt}]  (303.16,914.73) .. controls (305.17,910) and (305.5,910.33) .. (308.17,908.67) ;
			\draw  [dash pattern={on 4.5pt off 4.5pt}]  (296.02,915.14) .. controls (299.52,906.36) and (306.5,902.67) .. (307.5,902.67) ;
			\draw  [dash pattern={on 4.5pt off 4.5pt}]  (330.42,915.94) .. controls (343.29,891.76) and (387.93,872.02) .. (408.15,916.75) ;
			\draw  [dash pattern={on 4.5pt off 4.5pt}]  (336.2,917.96) .. controls (348.54,891.76) and (391.61,889.35) .. (401.06,916.75) ;
			\draw   (289.05,925.93) -- (283.7,921.98) -- (288.83,917.96) ;
			\draw  [dash pattern={on 4.5pt off 4.5pt}]  (337.46,898.14) .. controls (352.14,903.67) and (363.88,911.95) .. (365.61,917.55) ;
			\draw  [dash pattern={on 4.5pt off 4.5pt}]  (342.35,892.33) .. controls (354.87,891.5) and (368.57,910.72) .. (371.12,916.35) ;
			\draw    (327.39,899.25) .. controls (321.83,893.67) and (332.04,884.3) .. (334.63,894.25) ;
			\draw    (307.5,902.33) .. controls (308.61,899.92) and (316.17,902.33) .. (308.17,908.67) ;
			\draw    (327.39,899.25) .. controls (332.28,906.99) and (338.61,888.46) .. (340.63,892.61) ;
			\draw     ;
			\draw    (205.17,854) -- (244.92,823.22) ;
			\draw [shift={(246.5,822)}, rotate = 142.25] [color={rgb, 255:red, 0; green, 0; blue, 0 }  ][line width=0.75]    (10.93,-3.29) .. controls (6.95,-1.4) and (3.31,-0.3) .. (0,0) .. controls (3.31,0.3) and (6.95,1.4) .. (10.93,3.29)   ;
			\draw    (206.17,893.33) -- (241.93,921.43) ;
			\draw [shift={(243.5,922.67)}, rotate = 218.16] [color={rgb, 255:red, 0; green, 0; blue, 0 }  ][line width=0.75]    (10.93,-3.29) .. controls (6.95,-1.4) and (3.31,-0.3) .. (0,0) .. controls (3.31,0.3) and (6.95,1.4) .. (10.93,3.29)   ;
			\draw    (440.17,928.67) -- (489.33,928.67) ;
			\draw [shift={(491.33,928.67)}, rotate = 180] [color={rgb, 255:red, 0; green, 0; blue, 0 }  ][line width=0.75]    (10.93,-3.29) .. controls (6.95,-1.4) and (3.31,-0.3) .. (0,0) .. controls (3.31,0.3) and (6.95,1.4) .. (10.93,3.29)   ;
			\draw   (530.83,929) .. controls (530.83,915.19) and (542.03,904) .. (555.83,904) .. controls (569.64,904) and (580.83,915.19) .. (580.83,929) .. controls (580.83,942.81) and (569.64,954) .. (555.83,954) .. controls (542.03,954) and (530.83,942.81) .. (530.83,929) -- cycle ;
			\draw   (508.67,929) .. controls (508.67,902.95) and (529.78,881.83) .. (555.83,881.83) .. controls (581.88,881.83) and (603,902.95) .. (603,929) .. controls (603,955.05) and (581.88,976.17) .. (555.83,976.17) .. controls (529.78,976.17) and (508.67,955.05) .. (508.67,929) -- cycle ;
			\draw   (514.81,916.79) -- (508.37,923.95) -- (506.71,914.46) ;
			\draw   (530.04,916.39) -- (537.58,910.8) -- (536.26,920.1) ;
			
			\draw (90,922.67) node [anchor=north west][inner sep=0.75pt]    {$D_{+}=K_{(\ell,0)}$};
			\draw (320,999) node [anchor=north west][inner sep=0.75pt]    {$D_{0}$};
			\draw (325.67,843) node [anchor=north west][inner sep=0.75pt]    {$D_{-}=K_{(\ell-1,0)}$};

		\end{tikzpicture}

		\caption{Skein relation}
			\label{fig:skein}
	\end{figure}

	we have
	\[
	t^{-1}V(K(\ell,0)) - tV(K(\ell-1,0)) = (t^{1/2} - t^{-1/2})(-(t^{-1/2} + t^{1/2})) = t^{-1} - t,
	\]
	so
	\[
	V(K(\ell,0)) = t^2 V(K(\ell-1,0)) - t^2 + 1.
	\]
	By induction,
	\begin{align*}
		V(K(\ell,0)) &= t^{2\ell} V(K) + (1 - t^2)(t^{2(\ell-1)} + \cdots + t^2 + 1) \\
		&= t^{2\ell} V(K) + (1 - t^2) \cdot \frac{1 - (t^2)^{\ell}}{1 - t^2} \\
		&= t^{2\ell} V(K) + 1 - t^{2\ell}.
	\end{align*}
	If $V(K(\ell,0)) = V(K)$, then $t^{2\ell}V(K) + 1 - t^{2\ell} = V(K)$, which gives $(t^{2\ell} - 1)(V(K) - 1) = 0$. Since $t^{2\ell} - 1 \neq 0$ as a polynomial, we must have $V(K) = 1$, contradicting our hypothesis.
	
	For $\ell < 0$, a similar argument yields $V(K(\ell,0)) = V(K)$ implies $V(K) = 1$, again a contradiction.
\end{proof}
By Lemma~\ref{lem:sequiv} and Lemma~\ref{lem:jones}, we obtain the following.

\begin{theorem}\label{thm:main-body}
	For a knot $K$ with Jones polynomial $V(K)\neq 1$ and Seifert matrix satisfying $a_{22}=0$ together with the congruence of Theorem~\ref{thm:sequiv-char} (in particular whenever $s=a_{12}+a_{21}\mid\ell$), $K(\ell,0)$ is $S$-equivalent to $K$ but not equivalent to $K$. Thus the twist operation produces $S$-equivalent but inequivalent genus-one knots---an infinite family parametrized by the admissible values of $\ell$.
\end{theorem}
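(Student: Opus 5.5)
The plan is to assemble Theorem~\ref{thm:main-body} from the results already proved, splitting it into an algebraic half (the matrices) and a knot-theoretic half (the knots).

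\textbf{$S$-equivalence.} The Seifert matrix of $K(\ell,0)$ in the band basis of the twisted surface $S'$ is $M'$, as recorded before Lemma~\ref{lem:sequiv}. Since $S$-equivalence of knots means $S$-equivalence of any of their Seifert matrices, it suffices to show $M\sim_S M'$. In the special case $s\mid\ell$ (with $a_{22}=0$), Lemma~\ref{lem:sequiv} supplies the explicit unimodular matrix
\[
T=\begin{pmatrix}1&-\ell/s\\0&1\end{pmatrix}, \qquad TMT^t=M',
\]
so $M$ and $M'$ are related by a single $\Lambda_1$-move and are a fortiori $S$-equivalent. In the general case, $a_{22}=0$ holds and $\ell$ satisfies $(\dagger)$ for some $h\in H_s$, so the ``$\Leftarrow$'' direction of Theorem~\ref{thm:sequiv-char} gives $M\sim_S M'$. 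That theorem carries the standing hypothesis $\gcd(a_{11},s)=1$. I will read the statement ``the congruence of Theorem~\ref{thm:sequiv-char}'' as importing this hypothesis; the divisibility case needs no such hypothesis.

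\textbf{Inequivalence.} The hypothesis $V(K)\neq1$ lets us apply Lemma~\ref{lem:jones}, which gives $V(K(\ell,0))\neq V(K)$ for every $\ell\neq0$. The Jones polynomial is a knot invariant, so $K(\ell,0)\not\cong K$.

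\textbf{Infinitude.} The plan is to show that distinct admissible $\ell$ give distinct knots. The formula from the proof of Lemma~\ref{lem:jones},
\[
V(K(\ell,0))=t^{2\ell}V(K)+1-t^{2\ell},
\]
yields
\[
V(K(\ell,0))-V(K(\ell',0))=(t^{2\ell}-t^{2\ell'})(V(K)-1).
\]
For $\ell\neq\ell'$ this is a product of two nonzero elements of the Laurent polynomial ring, which has no zero divisors, so it is nonzero. Hence the knots $K(\ell,0)$ for different $\ell$ are pairwise distinct. The admissible $\ell$ include every nonzero multiple of $s$, and $s\neq0$ because $s^2=1-4a_{12}a_{21}$ is odd. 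So the family is infinite.

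\textbf{Main obstacle.} The one point needing care is the formula for $\ell<0$, which Lemma~\ref{lem:jones} treats only by ``a similar argument''. I would first verify that running the skein recursion
\[
V(K(\ell,0))=t^2V(K(\ell-1,0))-t^2+1
\]
backwards from $\ell=0$ yields the same closed form for all integers $\ell$. If it does, the pairwise distinctness argument covers negative $\ell$ as well.
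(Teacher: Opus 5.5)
Your proposal is correct and follows the paper's argument, which simply combines the $S$-equivalence criteria (Lemma~\ref{lem:sequiv}, and for the general congruence the $\Leftarrow$ direction of Theorem~\ref{thm:sequiv-char}) with Lemma~\ref{lem:jones}; you go beyond the paper's one-line proof by making the $\gcd(a_{11},s)=1$ hypothesis explicit and by actually justifying ``infinite family'' via $(t^{2\ell}-t^{2\ell'})(V(K)-1)\neq0$, and your worry about $\ell<0$ goes away because the recursion $V(K(\ell,0))=t^{2}V(K(\ell-1,0))-t^{2}+1$ holds for every $\ell\in\mathbb{Z}$ and can be run in both directions. Two small remarks: first, $s^{2}=1+4a_{12}a_{21}$, not $1-4a_{12}a_{21}$ (you copied the paper's sign slip), though the oddness of $s=\pm1+2a_{21}$ is unaffected; second, the genuinely delicate input you inherit from Lemma~\ref{lem:jones} is not the sign of $\ell$ but the identification of the smoothed link $D_0$ with the two-component unlink, which holds when the core of the second band is unknotted (as for $9_{46}$) but is not automatic for an arbitrary genus-one $K$, since in general $D_0$ is the $0$-framed antiparallel two-cable of that core.
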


\begin{remark}
	Lemma~\ref{lem:sequiv}, Lemma~\ref{lem:jones}, and Theorem~\ref{thm:main-body} are stated for $K(\ell,0)$. The same results hold for $K(0,\ell)$, with the condition $a_{22} = 0$ replaced by $a_{11} = 0$.
\end{remark}

	\section{Example}\label{example}

In this section, we provide explicit examples of our construction. The Seifert surface $S$ of a genus-one knot can be viewed as a disk attaching two bands. Denote the closed curves through the bands by $\alpha_1, \alpha_2$ (generators of $H_1(S)$) when computing the Seifert matrix. We introduce a convenient notation $\lambda(n,m,p)$ for the examples below; the relationship to the $K(\ell,0)$ notation of Section~\ref{construction} is explained after Definition~\ref{def:lambda}.

\begin{definition}
	Define the types of double crossing between two bands as shown in Figure~\ref{fig:double-crossing}, where the dashed line represents a band and the solid line with arrows is the closed curve $\alpha_1$ or $\alpha_2$.
\end{definition}

		\begin{figure}[H]
			\centering

			\tikzset{every picture/.style={line width=0.75pt}} 
			
			\begin{tikzpicture}[x=0.75pt,y=0.75pt,yscale=-0.8,xscale=0.8]
				
				\draw  [dash pattern={on 4.5pt off 4.5pt}]  (103,55) .. controls (147,41) and (200,135) .. (240,119) ;
				\draw  [dash pattern={on 4.5pt off 4.5pt}]  (111,135) .. controls (148,136) and (144,138) .. (164,115) ;
				\draw  [dash pattern={on 4.5pt off 4.5pt}]  (188,87) .. controls (197,82) and (197,63) .. (241,66) ;
				\draw    (509,48) .. controls (418,54) and (438,125) .. (355,124) ;
				\draw [shift={(436.17,80.9)}, rotate = 133.17] [color={rgb, 255:red, 0; green, 0; blue, 0 }  ][line width=0.75]    (10.93,-3.29) .. controls (6.95,-1.4) and (3.31,-0.3) .. (0,0) .. controls (3.31,0.3) and (6.95,1.4) .. (10.93,3.29)   ;
				\draw    (351,55) .. controls (408,53) and (405,64) .. (417,75) ;
				\draw    (437.65,102.59) .. controls (442.7,111.57) and (441,133) .. (516,127) ;
				\draw [shift={(478.12,127.52)}, rotate = 186.13] [color={rgb, 255:red, 0; green, 0; blue, 0 }  ][line width=0.75]    (10.93,-3.29) .. controls (6.95,-1.4) and (3.31,-0.3) .. (0,0) .. controls (3.31,0.3) and (6.95,1.4) .. (10.93,3.29)   ;
				\draw  [dash pattern={on 4.5pt off 4.5pt}]  (102,76) .. controls (146,62) and (192,155) .. (232,139) ;
				\draw    (103,65) .. controls (147,51) and (193,143) .. (251,130) ;
				\draw [shift={(180.56,104.15)}, rotate = 218.32] [color={rgb, 255:red, 0; green, 0; blue, 0 }  ][line width=0.75]    (10.93,-3.29) .. controls (6.95,-1.4) and (3.31,-0.3) .. (0,0) .. controls (3.31,0.3) and (6.95,1.4) .. (10.93,3.29)   ;
				\draw    (102,129) .. controls (139,130) and (135,132) .. (155,109) ;
				\draw  [dash pattern={on 4.5pt off 4.5pt}]  (99,121) .. controls (136,122) and (132,124) .. (152,101) ;
				\draw    (180,80) .. controls (189,75) and (192,54) .. (251,56) ;
				\draw [shift={(218.42,57.8)}, rotate = 167.89] [color={rgb, 255:red, 0; green, 0; blue, 0 }  ][line width=0.75]    (10.93,-3.29) .. controls (6.95,-1.4) and (3.31,-0.3) .. (0,0) .. controls (3.31,0.3) and (6.95,1.4) .. (10.93,3.29)   ;
				\draw  [dash pattern={on 4.5pt off 4.5pt}]  (173,70) .. controls (182,65) and (182,46) .. (226,49) ;
				\draw  [dash pattern={on 4.5pt off 4.5pt}]  (345,66) .. controls (402,64) and (399,75) .. (411,86) ;
				\draw  [dash pattern={on 4.5pt off 4.5pt}]  (350,44) .. controls (407,42) and (412,54) .. (424,65) ;
				\draw  [dash pattern={on 4.5pt off 4.5pt}]  (430.65,114.59) .. controls (435.7,123.57) and (437,144) .. (484,136) ;
				\draw  [dash pattern={on 4.5pt off 4.5pt}]  (444.65,90.59) .. controls (449.7,99.57) and (451,120) .. (498,112) ;
				\draw  [dash pattern={on 4.5pt off 4.5pt}]  (491,63) .. controls (422,65) and (437,137) .. (354,136) ;
				\draw  [dash pattern={on 4.5pt off 4.5pt}]  (481,47) .. controls (412,49) and (430,116) .. (347,115) ;
				
				\draw (162,166.4) node [anchor=north west][inner sep=0.75pt]    {$+1$};
				\draw (410,164.4) node [anchor=north west][inner sep=0.75pt]    {$-1$};

			\end{tikzpicture}
			
			\caption{positive double crossing and negative double crossing}
			\label{fig:double-crossing}
		\end{figure} 

For convenience, as in Section~\ref{construction}, the left band is called the first band and the other is called the second band. We denote the closed curve through the first band (second band, respectively) by $\alpha_1$ ($\alpha_2$, respectively), oriented counterclockwise.

\begin{definition}\label{def:lambda}
	Define a type of knot $\lambda(n,m,p)$ as follows:
	\begin{itemize}
		\item $n$ is the number of twists of the first band of $S$, with $|n|$ an even number (since the Seifert surface is orientable);
		\item $m$ is the number of twists of the second band, with $|m|$ also an even number;
		\item $p$ is the number of double crossings between the two bands, with $|p|$ odd and $|p| \geq 3$ (so that the knot is non-trivial);
		\item There are no other types of crossings between the two bands.
	\end{itemize}
\end{definition}

	\begin{example}
		The following Figure~\ref{fig:example-lambda} shows some examples of $\lambda(n,m,p)$.
	
		\begin{figure}[H]
			\centering
			\tikzset{every picture/.style={line width=0.4pt}} 
			
			\begin{tikzpicture}[x=0.75pt,y=0.75pt,yscale=-0.8,xscale=0.8]
				
				\draw    (54.27,41.96) .. controls (79.18,26.89) and (154.55,26.64) .. (138.36,55.28) ;
				\draw    (61.12,47.99) .. controls (75.14,35.93) and (139.29,33.67) .. (132.75,54.02) ;
				\draw    (128.39,34.42) .. controls (153.31,19.35) and (228.99,36.18) .. (195.66,106.03) ;
				\draw    (137.42,38.44) .. controls (165.14,32.67) and (205.01,48.24) .. (188.81,91.21) ;
				\draw    (147.39,75.88) .. controls (166.08,50.5) and (110.02,57.54) .. (125.28,43.97) ;
				\draw    (137.42,72.61) .. controls (153.93,54.78) and (98.49,62.56) .. (118.43,41.96) ;
				\draw    (153.62,89.7) .. controls (167.01,69.1) and (109.39,76.89) .. (134.31,61.81) ;
				\draw    (143.34,90.45) .. controls (163.27,74.88) and (98.8,79.65) .. (129.95,58.8) ;
				\draw    (90.71,89.95) .. controls (113.75,89.7) and (121.85,90.2) .. (132.13,77.89) ;
				\draw    (143.34,90.45) .. controls (136.49,92.97) and (127.77,87.44) .. (141.78,79.65) ;
				\draw    (90.71,89.95) .. controls (41.81,82.16) and (36.2,63.07) .. (61.12,47.99) ;
				\draw    (67.35,130.4) .. controls (26.24,116.08) and (10.98,63.82) .. (54.27,41.96) ;
				\draw    (67.35,130.4) .. controls (96.94,138.94) and (169.19,141.71) .. (195.66,106.03) ;
				\draw    (153.62,89.7) .. controls (149.88,95.48) and (181.96,101.76) .. (188.81,91.21) ;
				\draw    (336.23,38.16) .. controls (357.2,39.24) and (365.22,48.82) .. (360.29,61.28) ;
				\draw    (324.66,51.98) .. controls (339.42,38.7) and (355.92,46.2) .. (353.46,58.48) ;
				\draw    (349.08,38.16) .. controls (370.96,21.9) and (437.41,40.05) .. (408.15,115.38) ;
				\draw    (357.01,42.49) .. controls (381.35,36.26) and (416.35,53.06) .. (402.13,99.4) ;
				\draw    (366.31,82.96) .. controls (383.26,57.13) and (333.95,62.18) .. (347.35,47.55) ;
				\draw    (357.56,78.62) .. controls (372.05,59.38) and (319.28,65.07) .. (341.52,44.66) ;
				\draw    (371.23,97.77) .. controls (382.99,75.55) and (340.61,84.13) .. (356.65,67.96) ;
				\draw    (362.21,98.58) .. controls (379.71,81.78) and (323.11,86.93) .. (350.45,64.44) ;
				\draw    (316,98.04) .. controls (336.23,97.77) and (343.34,98.31) .. (352.36,85.04) ;
				\draw    (362.21,98.58) .. controls (356.19,101.29) and (348.54,95.33) .. (360.84,86.93) ;
				\draw    (316,98.04) .. controls (253.01,99.67) and (236.33,75.55) .. (258.21,59.29) ;
				\draw    (295.49,141.67) .. controls (237.7,133.27) and (211.86,78.13) .. (249.87,54.55) ;
				\draw    (295.49,141.67) .. controls (358.02,142.75) and (391.38,148.17) .. (408.15,115.38) ;
				\draw    (371.23,97.77) .. controls (367.95,104) and (396.12,110.78) .. (402.13,99.4) ;
				\draw    (273.79,52.25) .. controls (283.91,36.26) and (279.54,70.13) .. (292.66,54.42) ;
				\draw    (294.3,51.44) .. controls (304.6,36.22) and (305.24,65.53) .. (315.08,51.98) ;
				\draw    (285.01,50.89) .. controls (294.03,37.89) and (289.52,68.51) .. (303.87,53.06) ;
				\draw    (307.15,49.27) .. controls (315.49,38.7) and (312.94,58.16) .. (324.66,51.98) ;
				\draw    (317.27,48.18) .. controls (319.05,43.44) and (331.08,37.34) .. (336.23,38.16) ;
				\draw    (249.87,54.55) .. controls (265.32,43.04) and (256.84,71.49) .. (270.51,56.04) ;
				\draw    (261.76,53.87) .. controls (272.43,38.43) and (272.97,70.13) .. (282.27,56.04) ;
				\draw    (473.74,51.81) .. controls (494.25,37.91) and (556.3,37.68) .. (542.97,64.08) ;
				\draw    (479.38,57.36) .. controls (490.92,46.25) and (543.73,44.16) .. (538.35,62.92) ;
				\draw    (601.93,36.15) .. controls (643.57,41.71) and (645.01,104.71) .. (597.32,131.95) ;
				\draw    (596.4,39.67) .. controls (617.11,46.52) and (632.7,98.59) .. (587.06,98.59) ;
				\draw    (550.4,83.07) .. controls (565.78,59.68) and (519.63,66.17) .. (532.2,53.66) ;
				\draw    (542.2,80.06) .. controls (555.78,63.62) and (510.15,70.8) .. (526.56,51.81) ;
				\draw    (555.53,95.81) .. controls (566.55,76.82) and (519.12,84) .. (539.63,70.1) ;
				\draw    (547.07,96.51) .. controls (563.48,82.15) and (510.4,86.55) .. (536.04,67.32) ;
				\draw    (503.74,96.05) .. controls (522.71,95.81) and (529.38,96.28) .. (537.84,84.93) ;
				\draw    (547.07,96.51) .. controls (541.43,98.83) and (534.25,93.73) .. (545.79,86.55) ;
				\draw    (503.74,96.05) .. controls (463.49,88.87) and (458.87,71.26) .. (479.38,57.36) ;
				\draw    (484.51,133.34) .. controls (450.67,120.13) and (438.1,71.96) .. (473.74,51.81) ;
				\draw    (484.51,133.34) .. controls (508.87,141.21) and (575.27,145.38) .. (597.32,131.95) ;
				\draw    (555.53,95.81) .. controls (552.45,101.14) and (575.27,99.98) .. (587.06,98.59) ;
				\draw    (531.99,44.12) .. controls (534.86,40.04) and (539.58,36.7) .. (544.5,41.34) ;
				\draw    (585.82,40.62) .. controls (597.63,47.64) and (587.37,34.48) .. (601.93,36.15) ;
				\draw    (576.3,40.97) .. controls (587.33,48.55) and (580.19,33.55) .. (592.5,37.07) ;
				\draw    (555.47,41.4) .. controls (565.7,48.37) and (561.12,32.44) .. (571.78,38.74) ;
				\draw    (565.52,41.27) .. controls (580.72,47.52) and (568.3,31.33) .. (582.65,38.37) ;
				\draw    (546.5,42.99) .. controls (558.48,49.07) and (547.09,34.41) .. (561.99,38.16) ;
				\draw    (539.38,46.9) .. controls (550.66,44.86) and (539.79,35.41) .. (551.89,38) ;
				\draw   (57.78,119.62) -- (62.97,128.92) -- (52.35,128.03) ;
				\draw   (264.98,125.62) -- (270.17,134.92) -- (259.55,134.03) ;
				\draw   (488.06,128.4) -- (495,136.48) -- (484.42,137.73) ;
				\draw (59.33,156.33) node [anchor=north west][inner sep=0.75pt]    {$\lambda(0,0,3)$};
				\draw (282,155.67) node [anchor=north west][inner sep=0.75pt]    {$\lambda(-6,0,3)$};
				\draw (513.67,158.67) node [anchor=north west][inner sep=0.75pt]    {$\lambda(0,6,3)$};
			\end{tikzpicture}
			\caption{Some example of $\lambda(n,m,p)$}
			\label{fig:example-lambda}
		\end{figure}
		
	\end{example}

$\lambda(2\ell, 0, 3)$ is exactly the knot obtained by applying the operation of Section~\ref{construction} to $K = \lambda(0,0,3)$ on the first band with parameter $\ell$; i.e., in the notation of Section~\ref{construction}, $\lambda(2\ell, 0, 3) = K(\ell,0)$ where $K = \lambda(0,0,3)$. Similarly, $\lambda(0, 2\ell, 3) = K(0,\ell)$ for the same $K$.

\subsection{$S$-equivalence}

We compute the Seifert matrices $M(\lambda(n,m,p))$ of the knot $\lambda(n,m,p)$:
\[
M(\lambda(0,0,3)) = \begin{pmatrix} 0 & 2 \\ 1 & 0 \end{pmatrix},
\]
\[
M(\lambda(6,0,3)) = \begin{pmatrix} -3 & 2 \\ 1 & 0 \end{pmatrix}, \quad
M(\lambda(-6,0,3)) = \begin{pmatrix} 3 & 2 \\ 1 & 0 \end{pmatrix},
\]
\[
M(\lambda(0,6,3)) = \begin{pmatrix} 0 & 2 \\ 1 & -3 \end{pmatrix}, \quad
M(\lambda(0,-6,3)) = \begin{pmatrix} 0 & 2 \\ 1 & 3 \end{pmatrix}.
\]

We verify that:
\[
\begin{pmatrix} 1 & -1 \\ 0 & 1 \end{pmatrix} M(\lambda(0,0,3)) \begin{pmatrix} 1 & 0 \\ -1 & 1 \end{pmatrix} = M(\lambda(6,0,3)),
\]
\[
\begin{pmatrix} 1 & 1 \\ 0 & 1 \end{pmatrix} M(\lambda(0,0,3)) \begin{pmatrix} 1 & 0 \\ 1 & 1 \end{pmatrix} = M(\lambda(-6,0,3)),
\]
\[
\begin{pmatrix} 1 & 0 \\ -1 & 1 \end{pmatrix} M(\lambda(0,0,3)) \begin{pmatrix} 1 & -1 \\ 0 & 1 \end{pmatrix} = M(\lambda(0,6,3)),
\]
\[
\begin{pmatrix} 1 & 0 \\ 1 & 1 \end{pmatrix} M(\lambda(0,0,3)) \begin{pmatrix} 1 & 1 \\ 0 & 1 \end{pmatrix} = M(\lambda(0,-6,3)).
\]

Thus $K = \lambda(0,0,3)$, $K(3,0)$, $K(0,3)$, $K(-3,0)$, and $K(0,-3)$ are all $S$-equivalent to each other.

\subsection{Jones Polynomial Computation}

The knot $\lambda(0,0,3)$ is $9_{46}$ in Rolfsen's knot table \cite{Rolfsen1976}. Its Jones polynomial, computed via the standard skein relation, is
\[
V(\lambda(0,0,3)) = -\frac{1}{t} + \frac{1}{t^2} - \frac{2}{t^3} + \frac{1}{t^4} - \frac{1}{t^5} + \frac{1}{t^6} + 2.
\]

By Lemma~\ref{lem:jones}, we obtain:
\[
V(\lambda(-6,0,3)) = \frac{1}{t^6} - \frac{1}{t^7} + \frac{1}{t^8} - \frac{2}{t^9} + \frac{1}{t^{10}} - \frac{1}{t^{11}} + \frac{1}{t^{12}} + 1,
\]
\[
V(\lambda(0,-6,3)) = \frac{1}{t^6} - \frac{1}{t^7} + \frac{1}{t^8} - \frac{2}{t^9} + \frac{1}{t^{10}} - \frac{1}{t^{11}} + \frac{1}{t^{12}} + 1,
\]
\[
V(\lambda(6,0,3)) = t^6 - t^5 + t^4 - 2t^3 + t^2 - t + 2,
\]
\[
V(\lambda(0,6,3)) = t^6 - t^5 + t^4 - 2t^3 + t^2 - t + 2.
\]

We obtain
\[
V(\lambda(0,0,3)) \neq V(\lambda(6,0,3)) = V(\lambda(0,6,3)),
\]
\[
V(\lambda(0,0,3)) \neq V(\lambda(-6,0,3)) = V(\lambda(0,-6,3)).
\]

Thus $\lambda(0,0,3)$ is not equivalent to $K(3,0)$, $K(0,3)$, $K(-3,0)$, or $K(0,-3)$. Thus the twist operation on $9_{46}$ produces four distinct $S$-equivalent companion knots, all distinguished from $9_{46}$ by the Jones polynomial.

	\section{A remark on higher genus and further directions}\label{high-genus}
	
	Our construction extends to higher genus knots. We illustrate this with examples and then discuss open questions.
	
	\begin{example}
		Consider $K_1 = K \# K$ and $K_2 = K(-3,0) \# K$, where $K = \lambda(0,0,3)$ in the notation of Section~\ref{construction}. $K(-3,0)$ is exactly $\lambda(-6,0,3)$.  See Figure~ \ref{fig:connect-sum}. 
		\begin{figure}[H]
			\centering
			\tikzset{every picture/.style={line width=0.75pt}} 
			
			\begin{tikzpicture}[x=0.75pt,y=0.75pt,yscale=-0.7,xscale=0.7]
				
				\draw    (38.67,3280.67) .. controls (30.67,3241.33) and (108.67,3244.67) .. (100.75,3266.78) ;
				\draw    (53.83,3262.37) .. controls (62.34,3255.08) and (101.32,3253.71) .. (97.34,3266.02) ;
				\draw    (94.7,3254.17) .. controls (109.83,3245.05) and (142.45,3250.9) .. (140.99,3279.21) ;
				\draw    (100.18,3256.6) .. controls (117.02,3253.1) and (141.24,3262.52) .. (131.4,3288.51) ;
				\draw    (106.24,3279.24) .. controls (117.59,3263.89) and (83.53,3268.15) .. (92.8,3259.94) ;
				\draw    (100.18,3277.27) .. controls (110.21,3266.48) and (76.53,3271.19) .. (88.64,3258.72) ;
				\draw    (110.02,3287.6) .. controls (118.16,3275.14) and (83.16,3279.85) .. (98.29,3270.73) ;
				\draw    (103.78,3288.05) .. controls (115.89,3278.63) and (76.72,3281.52) .. (95.64,3268.91) ;
				\draw    (71.8,3287.75) .. controls (85.8,3287.6) and (90.72,3287.9) .. (96.97,3280.46) ;
				\draw    (103.78,3288.05) .. controls (99.61,3289.57) and (94.32,3286.23) .. (102.83,3281.52) ;
				\draw    (71.8,3287.75) .. controls (42.1,3283.04) and (38.7,3271.49) .. (53.83,3262.37) ;
				\draw    (38.67,3280.67) .. controls (45.67,3320.67) and (129.26,3310.46) .. (136.33,3296.33) ;
				\draw    (110.02,3287.6) .. controls (107.75,3291.09) and (127.24,3294.89) .. (131.4,3288.51) ;
				\draw    (374.18,3243.23) .. controls (386.85,3243.92) and (391.7,3250) .. (388.73,3257.92) ;
				\draw    (367.18,3252.01) .. controls (376.11,3243.58) and (386.08,3248.34) .. (384.59,3256.14) ;
				\draw    (381.95,3243.23) .. controls (395.17,3232.9) and (426.14,3247.04) .. (422.76,3270.17) ;
				\draw    (386.74,3245.99) .. controls (401.45,3242.03) and (422.61,3252.7) .. (414.01,3282.13) ;
				\draw    (392.36,3271.69) .. controls (402.61,3255.28) and (372.8,3258.49) .. (380.9,3249.2) ;
				\draw    (387.07,3268.94) .. controls (395.83,3256.72) and (363.93,3260.33) .. (377.38,3247.36) ;
				\draw    (395.34,3281.1) .. controls (402.44,3266.99) and (376.83,3272.44) .. (386.52,3262.17) ;
				\draw    (389.88,3281.62) .. controls (400.46,3270.95) and (366.25,3274.22) .. (382.78,3259.93) ;
				\draw    (361.95,3281.27) .. controls (374.18,3281.1) and (378.48,3281.45) .. (383.93,3273.01) ;
				\draw    (389.88,3281.62) .. controls (386.25,3283.34) and (381.62,3279.55) .. (389.06,3274.22) ;
				\draw    (361.95,3281.27) .. controls (321.94,3287.7) and (313.8,3266.99) .. (327.02,3256.66) ;
				\draw    (349.55,3308.99) .. controls (314.62,3303.65) and (299,3268.62) .. (321.98,3253.65) ;
				\draw    (349.55,3308.99) .. controls (394.47,3308.28) and (409.48,3315.9) .. (419.62,3295.07) ;
				\draw    (395.34,3281.1) .. controls (393.35,3285.06) and (410.38,3289.36) .. (414.01,3282.13) ;
				\draw    (336.44,3252.18) .. controls (342.56,3242.03) and (339.91,3263.54) .. (347.85,3253.56) ;
				\draw    (348.84,3251.67) .. controls (355.06,3242) and (355.45,3260.62) .. (361.4,3252.01) ;
				\draw    (343.22,3251.32) .. controls (348.67,3243.06) and (345.95,3262.51) .. (354.62,3252.7) ;
				\draw    (356.61,3250.29) .. controls (361.65,3243.58) and (360.1,3255.94) .. (367.18,3252.01) ;
				\draw    (362.72,3249.6) .. controls (363.8,3246.59) and (371.07,3242.72) .. (374.18,3243.23) ;
				\draw    (321.98,3253.65) .. controls (331.32,3246.33) and (326.19,3264.4) .. (334.46,3254.59) ;
				\draw    (329.17,3253.22) .. controls (335.61,3243.4) and (335.95,3263.54) .. (341.57,3254.59) ;
				\draw    (154.35,3276.79) .. controls (153.38,3234.69) and (229.44,3245.82) .. (215.92,3264.12) ;
				\draw    (169.01,3259.71) .. controls (177.52,3252.42) and (216.49,3251.05) .. (212.52,3263.36) ;
				\draw    (209.87,3251.5) .. controls (225.01,3242.39) and (266,3248.33) .. (253,3288.67) ;
				\draw    (215.36,3253.94) .. controls (232.2,3250.44) and (256.41,3259.86) .. (246.57,3285.85) ;
				\draw    (221.41,3276.58) .. controls (232.76,3261.23) and (198.71,3265.49) .. (207.98,3257.28) ;
				\draw    (215.36,3274.6) .. controls (225.38,3263.81) and (191.71,3268.52) .. (203.82,3256.06) ;
				\draw    (225.2,3284.94) .. controls (233.33,3272.48) and (198.33,3277.19) .. (213.47,3268.07) ;
				\draw    (218.95,3285.39) .. controls (231.06,3275.97) and (191.9,3278.86) .. (210.82,3266.25) ;
				\draw    (186.98,3285.09) .. controls (200.98,3284.94) and (205.9,3285.24) .. (212.14,3277.8) ;
				\draw    (218.95,3285.39) .. controls (214.79,3286.91) and (209.49,3283.57) .. (218.01,3278.86) ;
				\draw    (186.98,3285.09) .. controls (161.64,3285.98) and (153.87,3268.83) .. (169.01,3259.71) ;
				\draw    (158,3294.69) .. controls (158.24,3291.55) and (141.68,3285.93) .. (136.33,3296.33) ;
				\draw    (158,3294.69) .. controls (176.67,3316) and (245.6,3315.85) .. (253,3288.67) ;
				\draw    (225.2,3284.94) .. controls (222.92,3288.43) and (242.41,3292.23) .. (246.57,3285.85) ;
				\draw    (436.3,3271.44) .. controls (430.98,3231.8) and (511.01,3239.42) .. (497.32,3261.18) ;
				\draw    (450.64,3256.3) .. controls (459.11,3248.64) and (497.89,3247.2) .. (493.93,3260.13) ;
				\draw    (491.3,3247.68) .. controls (506.36,3238.1) and (546,3242) .. (535.67,3287.33) ;
				\draw    (496.76,3250.23) .. controls (513.51,3246.56) and (537.61,3256.46) .. (527.82,3283.75) ;
				\draw    (502.78,3274.01) .. controls (514.08,3257.89) and (480.19,3262.36) .. (489.42,3253.74) ;
				\draw    (496.76,3271.94) .. controls (506.74,3260.61) and (473.23,3265.56) .. (485.27,3252.47) ;
				\draw    (506.55,3282.79) .. controls (514.64,3269.7) and (479.82,3274.65) .. (494.88,3265.08) ;
				\draw    (500.33,3283.27) .. controls (512.38,3273.38) and (473.42,3276.41) .. (492.24,3263.16) ;
				\draw    (468.52,3282.95) .. controls (482.45,3282.79) and (487.35,3283.11) .. (493.56,3275.29) ;
				\draw    (500.33,3283.27) .. controls (496.19,3284.87) and (490.92,3281.36) .. (499.39,3276.41) ;
				\draw    (468.52,3282.95) .. controls (445,3281.86) and (443.07,3266.61) .. (450.64,3256.3) ;
				\draw    (422.76,3270.17) .. controls (423.24,3276.27) and (435.33,3277.54) .. (436.3,3271.44) ;
				\draw    (438.48,3292.02) .. controls (451,3310) and (532.33,3321.67) .. (535.67,3287.33) ;
				\draw    (506.55,3282.79) .. controls (504.29,3286.46) and (523.68,3290.45) .. (527.82,3283.75) ;
				\draw    (140.99,3279.21) .. controls (140.99,3284.05) and (154.6,3286.22) .. (154.35,3276.79) ;
				\draw    (419.62,3295.07) .. controls (423.48,3280.84) and (436.67,3286.67) .. (438.48,3292.02) ;
				\draw   (56.14,3298.27) -- (61.27,3304.96) -- (52.92,3303.8) ;
				\draw   (330.54,3298.67) -- (335.67,3305.36) -- (327.32,3304.2) ;	
				\draw (133.33,3314.33) node [anchor=north west][inner sep=0.75pt]    {$K_1$};
				\draw (426,3315.33) node [anchor=north west][inner sep=0.75pt]    {$K_2$};
			\end{tikzpicture}
			\caption{The connect sum $\lambda(0,0,3)\sharp \lambda(0,0,3)$ and $\lambda(0,0,3)\sharp \lambda(-6,0,3)$}
			\label{fig:connect-sum}
		\end{figure}
	
	Since $\lambda(0,0,3)$ is $S$-equivalent to $K(-3,0)$, and the Seifert matrix of a connected sum is the block sum of the individual Seifert matrices, the $\Lambda_1$-operation of Lemma~\ref{lem:sequiv} applied to the first block (with the identity on the second) shows that $K_1$ is $S$-equivalent to $K_2$.
	
	For the Jones polynomial of $K_1$ and $K_2$, since the Jones polynomial is multiplicative over connected sums \cite{Jones1987}, we have
	\[
	V(K_1) = V(\lambda(0,0,3)) \times V(\lambda(0,0,3)),
	\]
	and
	\[
	V(K_2) = V(\lambda(-6,0,3)) \times V(\lambda(0,0,3)).
	\]
	By the computation in Section~3.2, $V(K_1) \neq V(K_2)$. Thus $K_1$ is $S$-equivalent to $K_2$ but they are different knots.
	\end{example}
	
	\begin{remark}
	The connected sum construction above is the simplest way to extend our genus-one examples to higher genus, but it is not the only possibility. More generally, let $K$ be a knot of genus $g \geq 1$ with a Seifert surface $S$ of genus $g$. If $S$ contains a genus-one subsurface (obtained, say, by compressing $S$ along a suitable curve) whose Seifert matrix satisfies the conditions of Lemma~\ref{lem:sequiv}, then performing our twist operation on this subsurface yields a knot $K'$ that is $S$-equivalent but not equivalent to $K$. We expect that many higher-genus knots admit such subsurfaces, though we do not attempt a complete characterization here.
	
	In a different direction, Aka, Feller, Miller, and Wieser \cite[Example~7.4]{AkaFellerMillerWieser2023} construct higher-genus examples using direct sums of binary quadratic forms. Their construction yields pairs of genus $g$ Seifert matrices related by Gauss composition, while our construction produces $S$-equivalent but inequivalent knots via explicit twist operations. The relationship between these two approaches---in particular, whether the Gauss composition framework of \cite{AkaFellerMillerWieser2023} can be used to produce $S$-equivalent pairs in higher genus---remains open.
	\end{remark}

	\section*{Acknowledgements}
	The author would like to thank Xiang Liu and Shasha Yang at Hebei Normal University for discussions, and Xiaowei Pang for her encouragement. The project was funded by Science and Technology Project of Hebei Education Department  (Grant No.~QN2023030), Science Foundation of Hebei Normal University (Grant Nos.~L2022B02, L2025ZD04).
	
	\bibliographystyle{alpha}
	\bibliography{reference}

\end{document}